\numberwithin{equation}{section}%
\newtheorem{proposition}{Proposition}[section]
\newtheorem{definition}{Definition}
\newtheorem{remark}{Remark}[section]
\newtheorem{assumption}{Assumption}[section]
\newenvironment{proof}[1][Proof]{\noindent \textbf{#1.} }{\hfill$\Box$\par\medskip}
\newcommand{\beqn}[1]{\begin{equation}\label{#1}}
\newcommand{\eeqn}{\end{equation}}
\definecolor{darkgreen}{rgb}{0,0.6,0}
\definecolor{aau2}{rgb}{0.0, 0.5, 0.69}
\definecolor{aau3}{rgb}{0.0, 0.53, 0.74}
\definecolor{aau4}{rgb}{0.0, 0.48, 0.65}
\definecolor{aau5}{rgb}{0.0, 0.45, 0.73}
\definecolor{rsap}{RGB}{130, 36, 51}
\definecolor{gsap}{RGB}{112, 164, 137}
\definecolor{tud}{rgb}{0.43,0.73,0.11}
\definecolor{verde}{rgb}{0.33,0.53,0.11}
\definecolor{ttffqq}{rgb}{0.0, 0.48, 0.65} 
\definecolor{ffqqqq}{rgb}{0.0, 0.5, 0.69} 
\tikzstyle{decision} = [diamond, draw, fill=blue!20,
\tikzstyle{block} = [rectangle, draw, fill=blue!20,
\tikzstyle{line} = [draw, -latex']
\tikzstyle{cloud} = [draw, ellipse,fill=red!20, node distance=3cm,
\tikzstyle{cloud2} = [draw, ellipse,fill=green!20, node distance=3cm,
\pgfplotsset{compat=1.18}
\begin{document}
	
\title{Non-smooth stochastic gradient descent \\ using smoothing functions}

	\author{
		T. Giovannelli\thanks{Department of Mechanical, Materials, and Industrial Engineering, University of Cincinnati, Cincinnati, OH 45221, USA ({\tt giovanto@ucmail.uc.edu}).}
		\and
		J. Tan\thanks{Department of Industrial and Systems Engineering, Lehigh University, Bethlehem, PA 18015-1582, USA ({\tt jit423@lehigh.edu}).}
		\and
		L. N. Vicente\thanks{Department of Industrial and Systems Engineering, Lehigh University, Bethlehem, PA 18015-1582, USA ({\tt lnv@lehigh.edu}).}
	}
	
	\maketitle

\section*{Abstract}
In this paper, we address stochastic optimization problems involving a composition of a non-smooth outer function and a smooth inner function, a formulation frequently encountered in machine learning and operations research. To deal with the non-differentiability of the outer function, we approximate the original non-smooth function using smoothing functions, which are continuously differentiable and approach the original function as a smoothing parameter goes to zero (at the price of increasingly higher Lipschitz constants). The proposed smoothing stochastic gradient method iteratively drives the smoothing parameter to zero at a designated rate. We establish convergence guarantees under strongly convex, convex, and non-convex settings, proving convergence rates that match known results for non-smooth stochastic compositional optimization. In particular, for convex objectives, smoothing stochastic gradient achieves a~$1/T^{1/4}$ rate in terms of the number of stochastic gradient evaluations. We further show how general compositional and finite-sum compositional problems (widely used frameworks in large-scale machine learning and risk-averse optimization) fit the
assumptions needed for the rates (unbiased gradient estimates, bounded second moments, and accurate smoothing errors). 
We present preliminary numerical results indicating that smoothing stochastic gradient descent can be competitive for certain classes of problems.

\section{Introduction} 
Stochastic compositional optimization problems, in which the objective function involves a compositional structure of an inner and an outer function, have found increasing applications in machine learning and risk-averse optimization (see \cite{chen2021solving,shapiro2021lectures}). In this paper, we will consider the following two stochastic compositional problems  
    \begin{equation}\label{prob:1}
	\begin{split}
	\min_{x \in \Rmbb^n} ~~ & {\phi}(\mathbb{E}[\psi(x,\xi)]), 
    \end{split}
	\end{equation}
and 
    \begin{equation}\label{prob:2}
	\begin{split}
	\min_{x \in \Rmbb^n} ~~ &\mathbb{E} [{\phi}(\psi(x,\xi))], 
    \end{split}
	\end{equation}
where~$x$ is the vector of decision variables,~$\xi$ is a random vector,~$\Embb$ is taken with respect to the probability distribution of $\xi$,~$\psi$ is a differentiable function, and~$\phi$ is a function which is not necessarily smooth (meaning that the function~$\phi$ may be non-differentiable).

Both types of problems naturally occur in fields like reinforcement learning \cite{laguel2021superquantiles,dann2014policy}, risk management \cite{ahmed2007coherent,liu2022solving}, meta-learning \cite{finn2017model}, and distributionally robust
optimization \cite{li2023tilted}. An example of~${\phi}(\mathbb{E}[\psi(x,\xi)])$ involves super and sub-quantiles and functions (see, e.g.,~\cite{giovannelli2026stochastic}) that can be written as
    \[
    \max~\{ 0, \mathbb{E}[\psi(x,\xi)]) \}.
    \]
An instance of~$\min_{x \in \Rmbb^n}~\mathbb{E} [{\phi}(\psi(x,\xi))]$ is expected risk minimization, which is widely adopted in machine learning applications. A specific example is given as follows
\[
\min_{x \in \Rmbb^n}~~ \mathbb{E} [|x^\top u + b - v|],
\]
where the outer function is the $L_1$ loss, the inner function is linear, and $\xi = (u, v)$. We will discuss the formulations~\eqref{prob:1} and~\eqref{prob:2} in more detail in Sections~\ref{section: compositional} and~\ref{section: finite-sum}.




Due to the existence of non-differentiable points of the non-smooth outer function~$\phi$, traditional gradient-based methods cannot be directly applied to problems~\eqref{prob:1} and~\eqref{prob:2}.
To deal with the non-smoothness of~$\phi$, we will use a smoothing function (see Definition~\ref{def: smoothing funtion} below). Smoothing functions have been introduced in~\cite{chen1996class,zhang2009smoothing}. 

\begin{definition} \label{def: smoothing funtion}
    Let~$g:\Rmbb^n \rightarrow \Rmbb$ be a locally Lipschitz continuous function. We call~$\gtilde:\Rmbb^n\times[0,+\infty)\rightarrow\Rmbb$ a smoothing function of~$g$ if, for any~$\mu\in(0,+\infty)$,~$\gtilde(\cdot,\mu)$ is continuously differentiable in~$\Rmbb^n$ and, for any~$x\in\Rmbb^n$,
    \begin{equation} \label{eq: smoothing function}
        \lim_{z\rightarrow x,\mu\downarrow0}~\gtilde(z,\mu) \;=\; g(x).
    \end{equation}
\end{definition}

Notice that in this paper, for the convergence analysis, we adopt a general viewpoint in which the composite objective is treated as a non-smooth function covering both cases~\eqref{prob:1} and~\eqref{prob:2}. In the concrete settings considered in Sections~4 and~5, however, we take advantage of the problem structure and smooth the outer function~$\phi$, which will yield more explicit smoothing models and stochastic gradient estimators.

In Section~\ref{section: smoothing function}, we will provide an introduction to the essential properties of smoothing functions.
We will also see in Sections~\ref{section: compositional} and~\ref{section: finite-sum} what conditions~$\tilde{\phi}$ and~$\psi$ need to satisfy to ensure that~$\Embb[\tilde{\phi}(\psi(\cdot,\xi),\mu)]$ and $\tilde{\phi}(\Embb[\psi(\cdot,\xi)],\mu)$ are smoothing functions of~$\Embb[\phi(\psi(\cdot,\xi))]$ and~$\phi(\Embb[\psi(\cdot,\xi)])$, respectively (satisfying appropriate accuracy and convexity requirements). 
We will assume that we can compute stochastic smoothing gradients for both $\Embb[\tilde{\phi}(\psi(\cdot,\xi),\mu)]$ and $\tilde{\phi}(\Embb[\psi(x,\xi)],\mu)$ (the explicit forms will be shown in Sections~\ref{section: compositional} and~\ref{section: finite-sum}). 
As the smoothing parameter approaches zero, the smoothing function will approach the original function but at the price of larger gradient Lipschitz constants.

\subsection{Short review of convergence rates for gradient descent}\label{review}

Gradient descent (GD) and stochastic gradient descent (SGD) are widely used in optimization and machine learning problems due to their simplicity, efficiency, and effectiveness, particularly in terms of low iteration cost and memory storage requirements. The first development of GD can be traced back to~1847. Cauchy introduced a method of iterative improvement, laying out the fundamental idea of moving toward function minima by repeatedly taking steps proportional to the negative gradient. 
SGD arose in the context of the statistical approximation method, which was first introduced by Robbins and Monro \cite{robbins1951stochastic} in 1951. Over the subsequent decades, SGD gained widespread popularity in large-scale problems~\cite{bottou2018optimization}, owing to its efficiency and scalability in handling large datasets and high-dimensional parameter spaces.

Modern convergence theory traces back to Polyak’s seminal work on GD for $L$-smooth objectives in the 1960s \cite{polyak1963gradient,polyak1964some}. For a convex and $L$-smooth function $f$, taking a fixed stepsize~$1/L$ yields the classical sub-linear rate $\mathcal{O}(1/T)$ \cite{nesterov2013introductory}. When $f$ is additionally $\mu$-strongly convex, the same scheme converges at the linear rate $\mathcal{O}((1-\mu/L)^{T})$ \cite{polyak1964some}. In the non-smooth convex setting, replacing gradients with subgradients and using the diminishing stepsizes $\gamma_t=\Theta(1/\sqrt{t})$ attains the optimal rate $\mathcal{O}(1/\sqrt{T})$ \cite{nemirovskij1983problem,shor2012minimization}. Furthermore, for non-smooth and $\mu$-strongly convex objectives, choosing $\gamma_t=\Theta(1/t)$ tightens the bound to~$\mathcal{O}(1/T)$~\cite{polyak1987introduction}.

For SGD, analogous complexity results were developed. On a convex and $L$-smooth objective with unbiased stochastic gradients of bounded second moments, SGD with diminishing stepsizes~$\gamma_t = \Theta(1/\sqrt{t})$ achieves the optimal sub-linear rate $\mathcal{O}(1/\sqrt{T})$ \cite{nemirovski2009robust}. If the objective is additionally $\mu$-strongly convex, choosing $\gamma_t = \Theta(1/t)$ tightens the rate to~$\mathcal{O}(1/T)$ \cite{moulines2011non}. If smoothness is absent, in the convex setting, replacing gradients with subgradients while retaining $\gamma_t = \Theta(1/\sqrt{t})$ yields the classical~$\mathcal{O}(1/\sqrt{T})$ bound \cite{nemirovskij1983problem,shor2012minimization}. Finally, for non-smooth and $\mu$-strongly convex objectives, a subgradient scheme with~$\gamma_t = \Theta(1/t)$ attains the optimal~$\mathcal{O}(1/T)$ convergence rate~\cite{nemirovski2009robust,moulines2011non}.

We identify two main categories of optimization problems: the additive form~$f(x)=g(x)+h(x)$, which leads to a composite optimization problem, and the inner–outer (nested) form~$f(x)=\phi(\psi(x))$, which leads to a compositional optimization problem, each bringing its own algorithmic challenges and solution methods.
Considering the additive composite formulation~$f(x)=g(x)+h(x)$, suppose that~$g$ is smooth and~$L$-Lipschitz differentiable, and~$h$ is proximable and potentially non-smooth. Proximal-gradient methods have been extensively studied and widely used for solving this type of problem. Notable methods include ISTA~\cite{daubechies2004iterative} and its accelerated variant FISTA~\cite{beck2009fast}. Stochastic variants, such as FOBOS~\cite{duchi2009efficient} and RDA~\cite{xiao2009dual}, have also been explored. Additionally, accelerated stochastic proximal methods like Prox-SVRG and SAGA~\cite{xiao2014proximal,defazio2014saga} have gained popularity due to their improved convergence rates.
In contrast to the well-studied additive composite form, research on stochastic compositional optimization, defined by its nested inner-outer structure, has begun to attract attention in recent years. An early contribution is due to Ermoliev~\cite{ermoliev1976stochastic}, who proposed a two-timescale stochastic approximation method for optimizing the composition of two expectation‐valued functions; see Section~6.7 of~\cite{ermoliev1988numerical} for a brief related discussion.  A more recent advance came from Wang et al.~\cite{wang2017stochastic}, who formally introduced the stochastic compositional gradient descent (SCGD) method to solve problems of the form~$f(x)=\mathbb{E}[\phi(\mathbb{E}[\psi(x,\xi_1)],\xi_2)]$, where the outer expectation is taken with respect to the probability distribution of~$\xi_2$, the inner one is taken with respect to the probability distribution of~$\xi_1$, and the inner function may be non-smooth. The SCGD algorithm operates on two intertwined time-scales, employing an auxiliary sequence to estimate the inner expectation via an exponential moving average, while the outer iterate descends along a stochastic quasi-gradient formed from this estimation. For non-smooth objectives, SCGD achieves sample complexities of $\mathcal{O}(T^{-1/4})$ and $\mathcal{O}(T^{-2/3})$ in the convex and strongly convex cases, respectively. Compared to our method, SCGD primarily focuses on handling the non-smoothness of the inner function, whereas our approach specifically targets scenarios where the outer function is potentially non-smooth. Additionally, our method employs an iterative smoothing parameter scheme, ensuring controlled smoothing accuracy throughout the optimization process. 

Building on SCGD, subsequent work further developed the nested type stochastic compositional optimization problems (and their variants). Wang et al.~\cite{lian2017finite} integrated SVRG into SCGD for finite-sum compositional objectives~$f(x)=\frac{1}{N}\sum_{i=1}^N\phi_i\left(\frac{1}{M}\sum_{j=1}^M\psi_j(x)\right)$ (where~$\phi_i$ and~$\psi_j$ denote the values of~$\phi$ and~$\psi$ for specific realizations of their respective random vectors) and proved a linear convergence rate under strong convexity. For smooth but non-convex stochastic compositional objectives~$f(x)=\mathbb{E}[\phi(\mathbb{E}[\psi(x,\xi_1)],\xi_2)]$, advances have been made by~\cite{yuan2019stochastic,liu2021variance}.
In addition, several works extend proximal-gradient methods and their variants~\cite{wang2017accelerating,huo2018accelerated}, employ ADMM~\cite{yu2017fast}, use SARAH~\cite{zhang2019stochastic}, or adopt SAGA~\cite{zhang2019composite} to address stochastic compositional problems of the form $f(x)=\mathbb{E}[\phi(\mathbb{E}[\psi(x,\xi_1)],\xi_2)]+h(x)$, where both the inner and outer functions are smooth and a non-smooth regularization term $h$ is incorporated. 
In addition, for structured non-convex composite models of the form~$f(g(x))+h(x)$, where~$f$ and~$h$ are convex, possibly non-smooth, and~$g$ is a smooth vector mapping represented either as a finite average or as an expectation, 
it was shown in~\cite{zhang2022stochastic} that stochastic variance-reduced prox-linear methods achieve sample-complexity guarantees for finding an~$\varepsilon$-stationary point.

\subsection{Gradient descent using smoothing techniques}

Besides the methods discussed in Section~\ref{review}, another prominent approach for handling non-smooth objectives is the use of smoothing functions, which approximate a non-smooth function by a smooth surrogate that converges to the original function as the smoothing parameter diminishes.
Nesterov proposes a smoothing framework~\cite{nesterov2005smooth} (infimal convolution smoothing) and demonstrates that structured non-smooth convex problems admit an~$\Ocal(1/\varepsilon)$ first-order complexity bound for obtaining an~$\varepsilon$-accurate solution, improving upon the classical subgradient rate of~$\Ocal(1/\varepsilon^2)$. Alternative smoothing techniques, such as randomized smoothing~\cite{duchi2012randomized} and Moreau envelope smoothing~\cite{moreau1965proximite}, have been extensively explored in the literature. In derivative-free settings, notable smoothing approaches include Gaussian smoothing~\cite{nesterov2017random} and direct search using smoothing functions~\cite{garmanjani2013smoothing}.

For \textit{additive} composite optimization problems of the form~$f(x) = g(x) + h(x)$, where~$h(x)$ may be non-smooth, various smoothing techniques have been explored in the literature. 
Randomized smoothing~\cite{duchi2012randomized} was introduced to address this type of problem. Accelerated proximal gradient methods enhanced by Nesterov's smoothing technique have also been explored~\cite{lin2014smoothing, wang2022stochastic}. Additionally, an approach using the Moreau envelope along with a variable sample-size accelerated proximal scheme to manage non-smoothness has been proposed in~\cite{jalilzadeh2022smoothed}. For weakly convex non-smooth problems of the form~$f(x)=g(x)+r(x)$, where~$g$ is weakly convex and~$r$ is a closed convex function with a computable proximal map, Davis and Drusvyatskiy~\cite{davis2018stochastic} showed that a proximal stochastic subgradient method drives the gradient of the Moreau envelope to zero at a rate of~$\Ocal(T^{-1/4})$.
By contrast, to the best of our knowledge, existing studies on stochastic compositional problems with an inner–outer structure do not employ a smoothing gradient method that iteratively updates the smoothing parameter, as proposed in this paper.

\subsection{Compositional case using smoothing functions---our contribution}

The main goal of this paper is to develop a smoothing stochastic gradient (SSG) method using the gradients of smoothing functions, where the smoothing parameter is decreased at a controlled rate. Our motivation is drawn from compositional problems~\eqref{prob:1} and~\eqref{prob:2}, but we will abstract from these two settings and develop the SSG method and its convergence theory for a general scenario (see formulation~\eqref{problem: f(x)} in Section~\ref{section: general}) for which there is an accurate smoothing function, and for which there is the possibility of drawing unbiased smoothing stochastic gradients satisfying a bounded second order moment dependent on the Lipschitz constant of the smoothing function.
We will investigate the convergence of the SSG method for this general scenario under strongly convex, convex, and non-convex regimes (see Sections~\ref{subsection: convex}--\ref{subsection: non-convex} and Appendix~\ref{app:strongly convex}). Notably, for convex objectives, the SSG method attains a $1/T^{1/4}$ convergence rate in terms of stochastic gradient evaluations. 
We also show in Sections~\ref{section: compositional} and~\ref{section: finite-sum} that widely used frameworks such as compositional and finite-sum compositional problems
(which are instances of problems~\eqref{prob:1} and~\eqref{prob:2}, respectively) meet the required smoothing assumptions, including convexity, unbiased gradient estimates, bounded second moments, and controlled errors. The illustrative numerical experiments reported in Section~\ref{section: numerical} show that the SSG method can perform competitively for certain problem classes.

\section{Introduction to smoothing functions} \label{section: smoothing function}
In this section, we will present two main examples and their relevant properties of smoothing functions for the context of our paper. We specifically address widely used non-smooth functions in optimization, namely the~$L_1$ and the Hinge losses. At the end of the section, we mention one way of constructing smoothing functions through convolution with mollifiers, yielding smoothed functions that satisfy the key properties we outlined.

The $L_1$ norm is widely used in optimization, for instance in sparse optimization and compressed sensing (see \cite{donoho2006compressed,tibshirani1996regression}). Due to its non-smooth nature, it is advantageous to replace it with a smooth approximation. One way to smooth~$\lvert \cdot \rvert$ is introduced in \cite{chen2010smoothing}
\begin{equation} \label{example: l1}
\tilde{s}(t,\mu)\;=\;\int_{-\infty}^{+\infty} \big|t-\mu\tau\big|\,\rho(\tau)\,d\tau,    
\end{equation}
where $\rho:\mathbb{R}^n\to[0,+\infty)$ is a symmetric and piecewise continuous density function with a finite number of pieces, satisfying $\int_{-\infty}^{+\infty} |\tau|\,\rho(\tau)\,d\tau <+\infty$. 

The function~\eqref{example: l1} holds several important properties.  Firstly, $\tilde{s}(\cdot,\mu)$ is continuously differentiable on $\mathbb{R}$, with its derivative expressed as~$\tilde{s}'(t,\mu)=2\int_{0}^{t/\mu}\rho(\tau)d\tau$. As~$\mu$ approaches zero, the function converges uniformly to~$|t|$ on~$\mathbb{R}$, with the approximation error bounded by a multiple of the smoothing parameter~$\mu$. Moreover, this smoothing function satisfies the so-called gradient consistency, meaning that the limit points of~$\tilde{s}'(t,\mu)$ approach the Clarke subdifferential of~$|t|$, more specifically, it holds that~$\{ \lim_{t \to 0, \mu \downarrow 0} \tilde{s}'(t, \mu) \} = [-1,1]=\partial_{\lvert\cdot\rvert}(0)$. Finally, for any fixed~$\mu>0$, the derivative~$\tilde{s}'(t,\mu)$ is Lipschitz continuous with a constant that is proportional to~$1/\mu$.

An explicit example of~\eqref{example: l1} with a uniform density 
\[
\rho(\tau)\;=\;
\begin{cases}
\frac{1}{2}, & \tau\in\left[-\frac{1}{2},\frac{1}{2}\right],\\[1mm]
0, & \text{otherwise},
\end{cases}
\]
yields
\begin{equation} \label{function: |.|}
 \tilde{s}(t,\mu)\;=\;
\begin{cases}
\frac{t^2}{\mu}+\frac{\mu}{4}, & t\in\left[-\frac{\mu}{2},\frac{\mu}{2}\right],\\[1mm]
|t|, & \text{otherwise}.
\end{cases}   
\end{equation}

Besides the $L_1$ loss, the Hinge loss is also broadly used in machine learning and optimization applications. Similarly to the $L_1$ case, a particular smoothing function for the Hinge loss is
\begin{equation} \label{function: Hinge}
    \tilde{h}(t,\mu) \;=\; \begin{cases}
			1-t,                   & \quad t\leq 1 - \mu,    \\
            \frac{1}{4\mu}(1-t+\mu)^2,               & \quad 1-\mu<t\leq1 +\mu,   \\
            0,                            & \quad t >  1+\mu.
            \end{cases}
\end{equation}
This smoothing function holds the same properties as~\eqref{function: |.|}.

Generally speaking, there are various techniques to construct a smoothing function. For example, one can smooth the non-smooth function by convolution with mollifiers (see~\cite{rockafellar1998variational}). A parameterized family of measurable functions~$\{\psi_\mu : \mathbb{R}^n \to [0,+\infty),~\mu\in(0,\infty)\}$ is called a mollifier (or mollifier sequence) if it satisfies $\int_{\mathbb{R}^n} \psi_\mu(z)\,dz = 1$ and if~$B_\mu = \{ z : \psi_\mu(z) > 0 \}$ is bounded and converges to $\{0\}$ as $\mu \downarrow 0$.
A smoothing function~$\ftilde$ for a non-smooth function~$f$ is then constructed via convolution with mollifiers as follows
\begin{equation} \label{eq: conv_with_mol}
\tilde{f}(x,\mu) = \int_{\mathbb{R}^n} f(x-z)\,\psi_\mu(z)\,dz = \int_{\mathbb{R}^n} f(z)\,\psi_\mu(x-z)\,dz.    
\end{equation}
This construction guarantees pointwise convergence from $\tilde{f}(x,\mu)$ to~$f(x)$ as~$\mu$ goes to $0$~(thus satisfies~\eqref{eq: smoothing function} in Definition~\ref{def: smoothing funtion}). If the mollifiers are continuous on~$\Rmbb^n$, then $\tilde{f}(\cdot,\mu)$ is continuously differentiable. Note that when~$f$ is locally Lipschitz continuous at a limit point~$x_*$, one always has~$\partial f(x_*)\subseteq\operatorname{co} G_{\ftilde}(x_*)$ (see~\cite{chen2012smoothing}), where~$\partial f(x_*)$ is the Clarke subdifferential,~$\operatorname{co}$ denotes the convex hull, and~$G_{\ftilde}(x_*)=\{\lim_{x\to x_*,\mu\downarrow0}\nabla\ftilde(x,\mu)\}$. Moreover, using construction~\eqref{eq: conv_with_mol}, when~$f$ is Lipschitz continuous near~$x_*$, the so-called gradient consistency property, which says that~$\partial f(x_*) = \operatorname{co}G_{\tilde{f}}(x_*)$, is satisfied. Other than convolution with mollifiers, smoothing functions can also be constructed by convolution with probability density functions (see~\cite{rockafellar1998variational}).

\section{The smoothing stochastic gradient method} \label{section: general}
\subsection{Algorithm and assumptions}
We will be stating and analyzing a smoothing stochastic gradient method to solve the expected risk problem written in the general form
\begin{equation} \label{problem: f(x)}
    \min_{x\in \Rmbb^n}~f(x)~~(f~\text{involves}~\xi,~\psi~\text{and}~\phi),
\end{equation}
where again~$x$ is the vector of decision variable,~$\xi$ is a random vector,~$\psi$ is a differentiable function, and~$\phi$ is a function which is not necessarily smooth. Problem~\eqref{problem: f(x)} is meant to include cases~\eqref{prob:1} and~\eqref{prob:2} as two particular instances. We consider that a smoothing function~$\ftilde(x,\mu)$ is available for~$f(x)$ involving a smoothing function~$\tilde{\phi}$ of~$\phi$ and also including the randomness given by~$\xi$. We also assume that one can draw gradient estimates~$\nabla_x\ftilde(x,\xi,\mu)$ for the smoothing function~$\ftilde(x,\mu)$. A smoothing stochastic gradient method can then be stated in Algorithm~$\ref{algorithm: 1}$ and it only differs from the traditional stochastic gradient algorithm in the update of the smoothing parameter~$\mu_k$. 
\begin{algorithm}[H] 
	\caption{Smoothing stochastic gradient (\textbf{SSG}) method}\label{algorithm: 1}
	\begin{algorithmic}[1]
		\medskip
		\item[] {\bf Input:} $x_1$,~$\{\alpha_k\}$, and~$\{\mu_k\}$.
		\medskip
		\item[] {\bf For~$k = 1, 2, \ldots$ \bf do}
		\item[] \quad {\bf Step 1.} 
        Generate a realization~$\xi_k$ of the random variable~$\xi$.\;Then compute~$\nabla_x\ftilde(x_k,\xi_k,\mu_k)$.
            \nonumber
            \item[] \quad {\bf Step 2.}
        Update iterate~$
x_{k+1}=x_k-\alpha_k \nabla_x \tilde f(x_k,\xi_k,\mu_k)
$.
            \nonumber
		\item[] \quad {\bf Step 3.} 
        Update smoothing parameter~$\mu_{k+1}\leq\mu_k$.
		\item[] {\bf End do}
    	\end{algorithmic}
\end{algorithm}

In this section, we will analyze the convergence of the SSG algorithm in the convex and non-convex regimes. The strongly convex case has less applicability and is left to the appendix of this paper. We first introduce the general assumptions required for the convergence of the SSG algorithm in all three regimes.

Before we move on, we want to note that we state the assumptions at the level of the smoothing objective~$\tilde f$ rather than the original non-smooth objective~$f$. By doing this, our analysis will include a vast class of functions~$f$ and corresponding smoothing functions~$\ftilde$ (such as the ones considered in Sections~4 and~5).


Assumption~\ref{assumption: unbiasedness} states the unbiasedness of the smoothing gradient~$\nabla_x \ftilde(x, \xi, \mu)$, meaning that its expectation over random vector~$ \xi $
coincides with~$\nabla_x \ftilde(x, \mu)$. This requirement is typically mild and will be shown to hold under reasonable conditions in Sections~\ref{section: compositional} and~\ref{section: finite-sum}. 
\bassumption [Unbiasedness of the smoothing gradient] \label{assumption: unbiasedness}
For all~$x\in \Rmbb^n$ and for all $\mu>0$, the smoothing gradient~$\nabla_x \ftilde(x, \xi, \mu)$ serves as an unbiased stochastic estimator of~$\nabla_x \ftilde(x, \mu)$, meaning that
\begin{equation*}
       \Embb_\xi[\nabla_x \ftilde(x,\xi,\mu)]\;=\;\nabla_x \ftilde(x,\mu).  
\end{equation*}
\eassumption

Assumption~\ref{assumption: bounded variance} requires that the second moment of the smoothing gradient is bounded by a constant of the same order as~$1/\mu^2$. This is again a mild requirement, and it will be shown to hold under suitable conditions for most smoothing functions in Sections~\ref{section: compositional} and~\ref{section: finite-sum}.
\bassumption[Bounded second moment of the smoothing gradient] \label{assumption: bounded variance}
For all~$x\in \Rmbb^n$ and for all~$\mu>0$, the smoothing gradient~$\nabla_x \ftilde(x,\xi,\mu)$ is bounded as follows 
   \begin{equation*}
        \Embb_{\xi} [\|\nabla_x \ftilde(x,\xi,\mu)\|^2]\;\leq\; \frac{1}{\mu^2}G^2,
   \end{equation*}
where~$G>0$ is a positive constant.
\eassumption

Lastly, Assumption~\ref{assumption: sc: diff} ensures that the smoothing function~$\ftilde(x,\mu)$ closely approximates the true function~$f(x)$, with their difference (accuracy) controlled by the smoothing parameter~$\mu$. Similarly as for Assumption~\ref{assumption: unbiasedness} and Assumption~\ref{assumption: bounded variance}, such a requirement is mild and will be shown to hold under appropriate conditions for most smoothing functions in Sections~\ref{section: compositional} and~\ref{section: finite-sum}.
\bassumption[Accuracy of the smoothing function] \label{assumption: sc: diff}
For all~$x\in \Rmbb^n$ and for all~$\mu>0$, the difference between the true function and the smoothing function is bounded by some constant times the smoothing parameter in the following way
\begin{equation} \label{eq: accuracy}
   |f(x)-\ftilde(x,\mu)|\;\leq\; C\mu, 
\end{equation}
for some $C>0$.
\eassumption

By now, we have outlined the general assumptions required for the SSG algorithm. Next, we will present additional specific assumptions for the convex and non-convex cases, and then develop the corresponding convergence results. 

\subsection{Rate in the convex case} \label{subsection: convex}
In this subsection, we examine the convex case. While this generally leads to milder convergence guarantees compared to the strongly convex setting, it applies to a wider range of problems. We begin by outlining the assumption specific to the convex case and then present the convergence analysis for the SSG algorithm under such a condition. Assumption~\ref{assumption: convex} states the convexity of the smoothing function.
\bassumption[Convexity of the smoothing function] \label{assumption: convex}
For all $x\in \Rmbb^n$ and for all~$\mu>0$, the smoothing function $\ftilde(x,\mu)$ is convex in~$x$. 
\eassumption

To establish the convergence results, we also need to consider an additional assumption regarding the boundness of the sequence of iterates.
\bassumption[Boundedness of the sequence of iterates] \label{assumption: boundness feasible region}
There exists a positive constant $M>0$ such that~$\|x_k-x_*\|\leq M$ for all $k$, where~$x_*$ is any minimizer of~$f$.
\eassumption
\begin{remark} \label{remark: big M}
In the convex case, Assumption~\ref{assumption: boundness feasible region} is introduced to control the iterate sequence. Although the SSG algorithm considered in this paper is not formulated in projected form, this assumption is natural when one has in mind a constrained formulation over a bounded convex set~$\Xcal$. Indeed, if the initial iterate~$x_1$ is chosen in~$\Xcal$ and each iterate is projected onto~$\Xcal$, then boundedness follows automatically. Therefore, in the following proof, we consider the projected update rule obtained by replacing Step~2 of Algorithm~\ref{algorithm: 1} with
$x_{k+1}=\Pi_{\Xcal}(x_k-\alpha_k \nabla_x \tilde f(x_k,\xi_k,\mu_k))$,
where~$\Pi_{\Xcal}$ denotes the Euclidean projection onto~$\Xcal$.
\end{remark}

Building upon the convexity assumption and considering specific rates for both the smoothing parameter and the stepsize in the SSG method, we now present the convergence results in the convex setting, which provides a weaker convergence rate compared to the strongly convex case given in the appendix.
\btheorem[Convergence rate in the convex case] \label{theorem: convex}
Under Assumptions \ref{assumption: unbiasedness}--\ref{assumption: boundness feasible region}, suppose that the SSG algorithm runs with the smoothing parameter $\mu_k=k^{-a}$ and stepsize $\alpha_k=k^{-b}$, where $a>0$ and $b>0$. Define $f^T_{\text{best}}=\min_{k=\{0,1,\ldots,T\}}f(x_k)$. Then for any minimizer $x_*$ of $f$, one has
\begin{equation} \label{rate: convex}
    \begin{aligned}
        \Embb[f^T_{\text{best}}]-f(x_*) &\;\leq\; \frac{2Cc_1}{T^a}+\frac{M^2}{T^{1-b}} + \frac{c_2G^2}{2} \frac{1}{T^{-2a+b}},
    \end{aligned}
\end{equation}
where~$c_1$ and~$c_2$ are some positive constants.

Moreover, one can show that the best convergence rate of \eqref{rate: convex} is of the order of~$1/T^{1/4}$.

\etheorem
\bproof
First, using the projected update rule of the SSG algorithm stated in Remark~\ref{remark: big M}, together with the non-expansiveness of the projection operator, we obtain the following
\begin{equation} \label{eq: convex: using update rule}
\begin{aligned}
&\mathbb{E}[\|x_{k+1}-x_*\|^2|x_k] \\
&= \mathbb{E}[\|\Pi_\Xcal(x_k-\alpha_k \nabla_x \tilde f(x_k,\xi_k,\mu_k))-x_*\|^2 | x_k] \\
&= \mathbb{E}[\|\Pi_\Xcal(x_k-\alpha_k \nabla_x \tilde f(x_k,\xi_k,\mu_k))-\Pi_\Xcal(x_*)\|^2 | x_k] \\
&\le \mathbb{E}[\|x_k-\alpha_k \nabla_x \tilde f(x_k,\xi_k,\mu_k)-x_*\|^2 | x_k] \\
&= \mathbb{E}[\|x_k-x_*\|^2
-2\alpha_k \langle \nabla_x \tilde f(x_k,\xi_k,\mu_k),x_k-x_*\rangle
+\alpha_k^2 \|\nabla_x \tilde f(x_k,\xi_k,\mu_k)\|^2 | x_k] \\
&= \|x_k-x_*\|^2
-2\alpha_k \,\mathbb{E}[\langle \nabla_x \tilde f(x_k,\xi_k,\mu_k),x_k-x_*\rangle | x_k]
+\alpha_k^2 \,\mathbb{E}[\|\nabla_x \tilde f(x_k,\xi_k,\mu_k)\|^2 | x_k] \\
&= \|x_k-x_*\|^2
-2\alpha_k\langle \nabla_x \tilde f(x_k,\mu_k),x_k-x_*\rangle
+\alpha_k^2 \,\mathbb{E}[\|\nabla_x \tilde f(x_k,\xi_k,\mu_k)\|^2 | x_k].
\end{aligned}
\end{equation}

Then, by the convexity assumption of $\ftilde(x,\mu)$ (Assumption \ref{assumption: convex}), we have
\begin{equation} \label{ieq: convexity}
    \langle \nabla_x \ftilde(x_k,\mu_k),x_k-x_*\rangle \geq  \ftilde(x_k,\mu_k)-\ftilde(x_*,\mu_k).
\end{equation}
By combining \eqref{eq: convex: using update rule} and \eqref{ieq: convexity}, and taking the total expectation on both sides, we get
\begin{equation*} \label{ieq: convex: combine}
    \begin{aligned}
        \Embb[\|x_{k+1}-x_*\|^2] &\leq \Embb[\|x_k-x_* \|^2]  - 2\alpha_k\Embb[\ftilde(x_k,\mu_k)-\ftilde(x_*,\mu_k)] + \alpha_k^2 \Embb[\|\nabla_x \ftilde(x_k,\xi_k,\mu_k)\|^2].
    \end{aligned}
\end{equation*}
Using the boundedness assumption of the smoothing gradient estimates (Assumption \ref{assumption: bounded variance}), and dividing both sides by $\alpha_k$, we obtain
\begin{equation*} \label{ieq: convex: boundness assumption}
    \begin{aligned}
     2\Embb[\ftilde(x_k,\mu_k)-\ftilde(x_*,\mu_k)] &\leq\frac{\Embb[\|x_k-x_* \|^2] - \Embb[\|x_{k+1}-x_*\|^2]}{\alpha_k}   +  \frac{\alpha_k}{\mu_k^2}G^2. 
    \end{aligned}
\end{equation*}

Then, by summing the inequality from $k=1$ to $T$ for some $T>0$ and using the boundedness assumption of the sequence of iterates (Assumption~\ref{assumption: boundness feasible region}), we obtain
\begin{equation} \label{ieq: convex: sum}
\begin{aligned}
    \sum_{k=1}^T 2\Embb[\ftilde(x_k,\mu_k)-\ftilde(x_*,\mu_k)] 
    &\leq \frac{1}{\alpha_1}\Embb[\|x_1-x_* \|^2] + \sum_{k=2}^T(\frac{1}{\alpha_k}-\frac{1}{\alpha_{k-1}}) \Embb[\|x_{k}-x_*\|^2]+  \sum_{k=1}^T\frac{\alpha_k}{\mu_k^2}G^2 \\
    &\leq \frac{2M^2}{\alpha_T} + \sum_{k=1}^T\frac{\alpha_k}{\mu_k^2}G^2.
\end{aligned}
\end{equation}
From inequalities \eqref{eq: accuracy} (for~$x_n$ and~$x_*$) and \eqref{ieq: convex: sum}, we have
\begin{equation} \label{ieq: convex: after sum}
    \begin{aligned}
        &\sum_{k=1}^T 2\Embb[f(x_k)-f(x_*)] \\
        &\leq -\sum_{k=1}^T 2\Embb[\ftilde(x_k,\mu_k)-f(x_k)] + \sum_{k=1}^T2\Embb[\ftilde(x_*,\mu_k)-f(x_*)]+\frac{2M^2}{\alpha_T} + \sum_{k=1}^T\frac{\alpha_k}{\mu_k^2}G^2 \\
        &\leq 4C\sum_{k=1}^T \mu_k+\frac{2M^2}{\alpha_T} + \sum_{k=1}^T\frac{\alpha_k}{\mu_k^2}G^2.
    \end{aligned}
\end{equation}

Now let us first consider $\sum_{k=1}^T\mu_k = \sum_{k=1}^T \frac{1}{k^{a}}$, when $a\in(0,1)$, one can easily show the following inequality hold
\begin{equation} \label{ieq: convex: rt}
    \begin{aligned}
        \sum_{k=1}^T \frac{1}{k^{a}} \leq 1 + \int_{1}^{T} k^{-a} \leq c_1T^{1-a},
    \end{aligned}
\end{equation}
where $c_1>0$ is some positive constant.

Similarly, consider $\sum_{k=1}^T  \frac{\alpha_k}{\mu_k^2}=\sum_{k=1}^T k^{2a-b} $, when $b-2a<1$, one can also prove that it satisfies the following inequality
\begin{equation} \label{ieq: convex: st}
    \begin{aligned}
        \sum_{k=1}^T k^{2a-b} \leq c_2 T^{2a-b+1},
    \end{aligned}
\end{equation}
where $c_2>0$ is some positive constant.

Next, let us define $f^T_{\text{best}}=\min_{k=\{0,1,\ldots,T\}}f(x_k)$. Consider the left hand side of the inequality \eqref{ieq: convex: after sum}, we have
\begin{equation} \label{ieq: convex: lhs}
    \sum_{k=1}^T2\Embb[f(x_k)-f(x_*)] \geq 2T\Embb[f^T_{\text{best}}-f(x_*)].
\end{equation}
Thus, combining \eqref{ieq: convex: after sum}, \eqref{ieq: convex: rt}, \eqref{ieq: convex: st}, and \eqref{ieq: convex: lhs}, one arrives at~\eqref{rate: convex}.

Finally, we will establish the optimal convergence bound achievable by the algorithm. Under the constraints~$b-2a<1$ and~$0<a<1$, we aim to maximize
\begin{equation*}
    t = \min_{a,b\in\Rmbb_+}\{a, 1 - b,\ -2a + b\},
\end{equation*}
One can easily show using an LP formulation that the maximum value of~$t$ is~$1/4$ with the optimal solution~$(a,b)=(1/4,3/4)$. This result shows that the best bound the SSG algorithm could achieve for the expected risk problem~\eqref{problem: f(x)} in the convex case is of the order of $1/T^{1/4}$.
\eproof

Before moving on to the non-convex case, we note that the best convergence rate of the SSG algorithm in the convex setting is of the order of~$T^{1/4}$, which matches the rate for a different non-smooth convex stochastic composition problem (see \cite{wang2017stochastic}), where the non-smoothness lies instead in the inner function~$\psi$. Faster rates in this related compositional setting typically rely on additional structure or acceleration (see~\cite{wang2017accelerating} and Appendix~C).
In Section~\ref{section: compositional} and~Section~\ref{section: finite-sum}, we will demonstrate how the non-smooth compositional problems~\eqref{prob:1} and~\eqref{prob:2} in the convex case can be fitted in the SSG algorithm as instances of the general problem \eqref{problem: f(x)}, thereby allowing the SSG method to achieve the developed convergence rate of the non-smooth convex compositional setting.

\begin{remark}
We also note that, in some special settings, Problem~\eqref{prob:1} can be
reformulated through a saddle-point representation. For example, when
$\phi(\cdot)=|\cdot|$, one has $|t|=\max_{y\in[-1,1]} yt$, and hence
Problem~\eqref{prob:1} can be written as
\begin{equation} \label{prob: example}
    \min_x \max_{y\in[-1,1]} \; \mathbb{E}[y\psi(x,\xi)].
\end{equation}
For such a formulation, standard stochastic primal--dual methods can achieve an
$\Ocal(T^{-1/2})$ rate in terms of the expected primal--dual gap of a weighted
average of the iterates, under a convex--concave assumption
(see~\cite{nemirovski2009robust,juditsky2011solving}). However, this rate is
not applicable in our setting because when the inner function
$\psi$ is nonlinear (for instance quadratic), the saddle-point reformulation is generally not
convex--concave, and thus the standard primal--dual convergence theory does not
directly apply. This situation commonly arises in machine learning
settings where $\psi$ represents a nonlinear prediction model.
\end{remark}

\subsection{Rate in the non-convex case} \label{subsection: non-convex} 
In this subsection, we examine the non-convex setting of the SSG method. The lack of convexity in this context introduces additional analytical challenges compared to the strongly convex and convex cases. We begin by outlining the assumptions necessary for the non-convex scenario and subsequently develop the corresponding convergence results for the SSG algorithm under these assumptions.

Assumption \ref{assumption: L-smoothness} requires that the gradient of the smoothing function is Lipschitz continuous with a constant inversely proportional to the smoothing parameter~$\mu$. This inverse relationship indicates that decreasing~$\mu$, thereby reducing the degree of smoothing and allowing the function to more closely approximate the original non-smooth objective, results in an increase in the Lipschitz constant. Most smoothing functions (see Section~\ref{section: smoothing function}) satisfy this property. We also require that the stepsize should be of a diminishing type, as shown in Assumption~\ref{assumption: non-convex: stepsize}.

\bassumption \label{assumption: L-smoothness}
For all~$x\in\Rmbb^n$ and for all~$\mu>0$, the gradient of the smoothing function~$\ftilde(x,\mu)$ is Lipschitz continuous in~$x$ with a Lipschitz constant~$C_0/\mu$ for some~$C_0>0$.
\eassumption
\bassumption \label{assumption: non-convex: stepsize}
The sequence of diminishing stepsizes~$\{\alpha_k\}$ satisfies~$\sum_{k=1}^\infty \alpha_k= \infty$, and $\sum_{k=1}^\infty \alpha_k^d<\infty$ for some~$d>1$.
\eassumption

Assumption~\ref{assumption: non-convex: scaling} requires the smoothing approximation to vary in a uniformly Lipschitz manner with respect to the smoothing parameter~$\mu$. This condition is used to control the change in the smoothing objective caused by moving from~$\mu_k$ to~$\mu_{k+1}$ across successive iterations. We will see in Sections~4 and~5 that this assumption is reasonable.
\bassumption \label{assumption: non-convex: scaling}
There exists a constant $C_1>0$ such that, for all $x\in\mathbb R^n$ and all $\mu,\mu'>0$,
\[
|\tilde f(x,\mu')-\tilde f(x,\mu)|
\;\le\; C_1 |\mu'-\mu|.
\]
\eassumption

After having listed all the required assumptions, we now establish a first convergence result for the non-convex setting on the gradient of the smoothing function. 
Notice that, in the non-convex case, unlike in the convex setting, we do not require the bounded feasible region assumption. Therefore, our analysis applies to a more general unconstrained setting, and no projection step is needed. Accordingly, we consider the unprojected update rule
$x_{k+1}=x_k-\alpha_k \nabla_x \tilde f(x_k,\xi_k,\mu_k)$
in Algorithm~1.
In what follows, we present the asymptotic convergence analysis, drawing inspiration from the proof in~\cite{TGiovannelli_GKent_LNVicente_2022}. 
\btheorem[Convergence in the non-convex case] \label{theorem: non-convex}
Under Assumptions~\ref{assumption: unbiasedness}--\ref{assumption: sc: diff} and~\ref{assumption: L-smoothness}--\ref{assumption: non-convex: scaling},  for~$d\in(1,2)$, by choosing~$\mu_k=\alpha_k^{-\frac{d}{3}+\frac{2}{3}}$, one has
\begin{equation} \label{lim: noncovex1}
    \lim_{T\rightarrow\infty}\Embb\left[\sum_{k=1}^T \alpha_k\|\nabla_x \ftilde(x_k,\mu_k)\|^2\right] \;<\; \infty.
\end{equation}
Moreover, with $A_T\coloneq \sum_{k=1}^T \alpha_k$, one has
\begin{equation} \label{lim: noncovex2}
    \lim_{T\rightarrow\infty}\Embb\left[\frac{1}{A_T}\sum_{k=1}^T \alpha_k\|\nabla_x \ftilde(x_k,\mu_k)\|^2\right] \;=\; 0.
\end{equation}
\etheorem
\bproof
From Assumptions~\ref{assumption: L-smoothness} and~\ref{assumption: non-convex: scaling}, and noting that the smoothing parameter decreases at each iteration in our setup, one has
\begin{equation*} \label{ieq: non-convex: Lsmooth}
\begin{aligned}
    \ftilde(x_{k+1},\mu_{k+1})-\ftilde(x_k,\mu_k) &= (\ftilde(x_{k+1},\mu_{k+1})-\ftilde(x_{k+1},\mu_k))+(\ftilde(x_{k+1},\mu_k)-\ftilde(x_k,\mu_k)) \\
    &\leq C_1(\mu_{k}-\mu_{k+1}) +\langle \nabla_x \ftilde(x_k,\mu_k),x_{k+1}-x_{k}\rangle + \frac{C_0}{2\mu_k}\|x_{k+1}-x_{k}\|^2.
\end{aligned}
\end{equation*}
Recalling the update rule of the SSG algorithm~$x_{k+1} = x_k-\alpha_k \nabla_x \ftilde(x_k,\xi_k,\mu_k)$, we have
\begin{equation} \label{ieq: non-convex: recall update}
\begin{aligned}
    \ftilde(x_{k+1},\mu_{k+1})-\ftilde(x_k,\mu_k) \;\leq\; &C_1(\mu_{k}-\mu_{k+1}) - \alpha_k  \langle\nabla_x\ftilde(x_k,\mu_k),\nabla_x \ftilde(x_k,\xi_k,\mu_k) \rangle \\
    &+ \frac{\alpha_k^2 C_0}{2\mu_k} \|\nabla_x \ftilde(x_k,\xi_k,\mu_k)\|^2.    
\end{aligned}
\end{equation}
Then, by taking conditional expectations with respect to the distribution of $\xi_k$ of \eqref{ieq: non-convex: recall update}, we obtain
\begin{equation} \label{ieq: non-convex: expectation_k}
\begin{aligned}
    \Embb[\ftilde(x_{k+1},\mu_{k+1})|\xi_k]-\ftilde(x_k,\mu_k)\;\leq\; &C_1(\mu_{k}-\mu_{k+1}) - \alpha_k \|\nabla_x\ftilde(x_k,\mu_k)\|^2 \\&+\frac{\alpha_k^2 C_0}{2\mu_k} \Embb[ \|\nabla_x \ftilde(x_k,\xi_k,\mu_k)\|^2|\xi_k].    
\end{aligned}
\end{equation}
Now we take the total expectation of \eqref{ieq: non-convex: expectation_k} and obtain
\begin{equation} \label{ieq: non-convex: before sum}
\begin{aligned}
    \Embb[\ftilde(x_{k+1},\mu_{k+1})] - \Embb[\ftilde(x_k,\mu_k)] \;\leq\; &C_1(\mu_{k}-\mu_{k+1})-\alpha_k \Embb[\|\nabla_x\ftilde(x_k,\mu_k)\|^2]\\&+\frac{\alpha_k^2 C_0}{2\mu_k} \Embb[ \|\nabla_x \ftilde(x_k,\xi_k,\mu_k)\|^2].    
\end{aligned}
\end{equation}

Next, by summing the inequality~\eqref{ieq: non-convex: before sum} from~$k=1$ to~$T$ for some~$T>0$, we have
\begin{equation} \label{ieq: non-convex: sum}
\begin{aligned}
    \Embb[\ftilde(x_{T+1},\mu_{T+1})] - \Embb[\ftilde(x_1,\mu_1)] \;\leq\; &C_1\sum_{k=1}^T(\mu_{k}-\mu_{k+1})  -\sum_{k=1}^T \alpha_k \Embb[\|\nabla_x\ftilde(x_k,\mu_k)\|^2] \\&+ \frac{C_0}{2} \sum_{k=1}^T \frac{\alpha_k^2}{\mu_k} \Embb[ \|\nabla_x \ftilde(x_k,\xi_k,\mu_k)\|^2].
\end{aligned}
\end{equation}
By defining $\ftilde^T_{\text{best}}=\min_{k=\{0,1,\ldots,T+1\}}\ftilde(x_k,
\mu_k)$ and applying Assumption~\ref{assumption: bounded variance}, we obtain
\begin{equation} \label{ieq: non-convex: after variance assumption}
\begin{aligned}
    \sum_{k=1}^T \alpha_k\Embb[\|\nabla_x\ftilde(x_k,\mu_k)\|^2] &\leq C_1(\mu_1-\mu_{T+1})+\Embb[\ftilde(x_1,\mu_1)] - \ftilde^T_{\text{best}} + \frac{C_0 G^2}{2}\sum_{k=1}^T \frac{\alpha_k^2}{\mu_k^3} \\
    &\leq C_1\mu_1 + \Embb[\ftilde(x_1,\mu_1)]+\frac{C_0 G^2}{2}\sum_{k=1}^T \frac{\alpha_k^2}{\mu_k^3}.
\end{aligned}
\end{equation}
By choosing $\mu_k = \alpha_k^{-\frac{d}{3} + \frac{2}{3}}$ for some $d\in(1,2)$, the diminishing stepsize assumption (Assumption~\ref{assumption: non-convex: stepsize}) ensures that the right-hand side of 
\eqref{ieq: non-convex: after variance assumption} converges to a finite limit 
as $T \to \infty$. Since the left-hand side is a nondecreasing sum (all terms are nonnegative), we prove that the theorem's first statement~\eqref{lim: noncovex1} holds.

Next, we turn to prove the second statement of the theorem. We divide both sides of \eqref{ieq: non-convex: after variance assumption} by $A_T=\sum_{k=1}^T \alpha_k$ and get
\begin{equation} \label{eq: nonconvex last}
\begin{aligned}
    \frac{\sum_{k=1}^T \alpha_k\Embb[\|\nabla_x\ftilde(x_k,\mu_k)\|^2]}{A_T} 
    \leq \frac{C_1(\mu_1-\mu_{T+1})}{A_T}+\frac{\Embb[\ftilde(x_1,\mu_1)] - \ftilde^T_{\text{best}}}{A_T} + \frac{\frac{C_0 G^2}{2}\sum_{k=1}^T \frac{\alpha_k^2}{\mu_k^3}}{A_T}.
\end{aligned}
\end{equation}
Taking the limit as $T\rightarrow\infty$, recalling~$\mu_k = \alpha_k^{-\frac{d}{3} + \frac{2}{3}}$, and using the assumption of the stepsize (Assumption~\ref{assumption: non-convex: stepsize}), we prove that the theorem's second statement~\eqref{lim: noncovex2} holds.
\eproof


While Theorem~\ref{theorem: non-convex} does not provide an explicit convergence rate, the following corollary specifies the rate when we assume the computational budget~$T$ is known in advance.
\bcorollary \label{coroll: nonconvex rate1}
Under Assumptions~\ref{assumption: unbiasedness}--\ref{assumption: sc: diff} and~\ref{assumption: L-smoothness}--\ref{assumption: non-convex: scaling}, for~$d\in(1,2)$ and~$b\in(1/d,1)$, by choosing~$\mu_k=\alpha_k^{-\frac{d}{3}+\frac{2}{3}}$ and~$\alpha_k=k^{-b}$, one further has
\begin{equation} \label{lim: noncovex3}
    \min_{1\leq k \leq T}\Embb\!\left[\|\nabla_x\ftilde(x_k,\mu_k)\|^2\right]=\Ocal(T^{-(1-b)}).
\end{equation}

Moreover, one can show that the best convergence rate of~\eqref{lim: noncovex3} can be arbitrarily close to the order of~$T^{-1/2}$.
\ecorollary
\bproof
Applying inequality~\eqref{eq: nonconvex last} with~$\alpha_k = k^{-b}$ for~$b \in (1/d,1)$, and noting that~$bd>1$, we obtain
\begin{equation} \label{eq: order}
A_T=\sum_{k=1}^T \alpha_k =\sum_{k=1}^Tk^{-b}=\Theta(T^{1-b})
\quad\text{and}\quad
\sum_{k=1}^T\frac{\alpha_k^2}{\mu_k^3}=\sum_{k=1}^T\alpha_k^d=\sum_{k=1}^Tk^{-bd} =\Ocal(1).
\end{equation}
Therefore, it follows from~\eqref{eq: nonconvex last} that
\[
\frac{\sum_{k=1}^T \alpha_k \Embb\!\left[\|\nabla_x\ftilde(x_k,\mu_k)\|^2\right]}{A_T}
=
\Ocal(T^{-(1-b)}).
\]
Since
\[
\min_{1\leq k\leq T}\Embb\!\left[\|\nabla_x\ftilde(x_k,\mu_k)\|^2\right]
\leq
\frac{\sum_{k=1}^T \alpha_k \Embb\!\left[\|\nabla_x\ftilde(x_k,\mu_k)\|^2\right]}{A_T},
\]
we conclude that
\[
\min_{1\leq k \leq T}\Embb\!\left[\|\nabla_x\ftilde(x_k,\mu_k)\|^2\right]
=
\Ocal(T^{-(1-b)}).
\]

We note that, for any fixed choice of~$d$, the best rate allowed by the assumptions is achieved by taking~$b$ arbitrarily close to the lower end of its admissible interval, which is of order~$1/d$. Consequently, the overall best rate is approached by taking~$d$ arbitrarily close to~$2$ and choosing~$b$ correspondingly close to~$1/2$. In this way, one obtains a rate arbitrarily close to~$T^{-1/2}$.
\eproof

Theorem~\ref{theorem: non-convex} and Corollary~\ref{coroll: nonconvex rate1} provide the convergence of the SSG method for gradients of the smoothing function~$\ftilde$. We want to further investigate the convergence for subgradients of the true non-smooth function~$f$. For this purpose, we impose Assumption~\ref{assumption: subgradient} which guarantees the existence of a nearby true subgradient within a distance of the order of~$\mu$ to any smoothing gradient.
\bassumption \label{assumption: subgradient}
For all~$x\in \Rmbb^n$, and for all~$\mu>0$, there exists a~$y\in\Bcal(x,E_1\mu)$ and a subgradient~$g(y)\in\partial f(y)$ such that 
\begin{equation} \label{ieq: non-convex: f-g}
    \|\nabla_x \ftilde(x,\mu)-g(y)\| \;\leq\; E_2\mu,
\end{equation}
where~$E_1$ and~$E_2$ are some positive constants.
\eassumption

Note that when~$f$ is locally Lipschitz continuous at a limit point~$x_*$, Assumption~\ref{assumption: subgradient} 
implies the gradient consistency property (see details in Section~\ref{section: smoothing function}). 
In fact, consider any sequence~$\{(x_k,\mu_k)\}$ satisfying~$x_k \to x_*~\text{and}~\mu_k \downarrow 0$, and let~$y_k$ be the corresponding points that satisfy Assumption~\ref{assumption: subgradient}. It is easy to show that~$y_k \to x_*$ as~$\mu_k \downarrow 0$.  Since~$f$ is locally Lipschitz continuous at~$x_*$, its Clarke subdifferential is outer semicontinuous at~$x_*$ (see~\cite[Theorem 5.19]{rockafellar1998variational}), which leads to~$g(y_k)\to g(x_*)\in\partial f(x_*)$. Thus, taking limits in~$\|\nabla_x \ftilde(x_k,\mu_k)-g(y_k)\|\leq E_2\mu_k$ yields
\begin{equation} \label{lim: gc}
    \lim_{x_k \to x_*,\,\mu \downarrow 0}\nabla_x \tilde{f}(x_k,\mu_k) = g(x_*)\in \partial f(x_*).
\end{equation}
Using~\eqref{lim: gc} and the fact that the Clarke subdifferential~$\partial f(x_*)$ is always a convex set, we have~$\operatorname{co} G_{\ftilde}(x_*)=\operatorname{co} \{\lim_{x \to x_*,\,\mu \downarrow 0}\nabla_x \tilde{f}(x,\mu)\} \subseteq \partial f(x_*)$. Since one always has~$\partial f(x_*)\subseteq\operatorname{co} G_{\ftilde}(x_*)$ for a locally Lipschitz continuous~$f$ (see in Section~\ref{section: smoothing function}), we have thus showed that Assumption~\ref{assumption: subgradient} implies the gradient consistency property. In addition, in Appendix~A, we directly explain how such a nearby point~$y_k$ can be found in the compositional case~$f(x)=\phi(\psi(x))$, thereby justifying the reasonableness of this assumption.

\bcorollary[Convergence in the non-convex case] \label{cor: non-convex}
Under Assumptions~\ref{assumption: unbiasedness}--\ref{assumption: sc: diff} and~\ref{assumption: L-smoothness}--\ref{assumption: subgradient}, for~$d\in(1,7/5]$, by choosing~$\mu_k=\alpha_k^{-\frac{d}{3}+\frac{2}{3}}$, one has
\begin{equation} \label{lim: cor: noncovex1}
    \lim_{T\rightarrow\infty}\Embb\left[\sum_{k=1}^T \alpha_k\|g(y_k)\|^2\right] \;<\; \infty.
\end{equation}
Moreover, with $A_T\coloneq \sum_{k=1}^T \alpha_k$, one has
\begin{equation} \label{lim: cor: noncovex2}
    \lim_{T\rightarrow\infty}\Embb\left[\frac{1}{A_T}\sum_{k=1}^T \alpha_k\|g(y_k)\|^2\right] \;=\; 0,
\end{equation}
where~$g(y_k)$ is a subgradient satisfying~\eqref{ieq: non-convex: f-g}.
\ecorollary
\bproof
For any~$y_k\in\Bcal(x_k,E_1\mu)$ such that the subgradient~$g(y_k)$ satisfies Assumption~\ref{assumption: subgradient}, we have
\begin{equation} \label{ieq: non-convex: cor: sub_square}
    \begin{aligned}
        \Embb\left[\sum_{k=1}^T \alpha_k\|g(y_k)\|^2\right] 
        &= \Embb\left[\sum_{k=1}^T \alpha_k\|g(y_k)-\nabla_x\ftilde(x_k,\mu_k)+\nabla_x\ftilde(x_k,\mu_k)\|^2\right] \\
        &\leq 2\Embb\left[\sum_{k=1}^T \alpha_k\|g(y_k))-\nabla_x\ftilde(x_k,\mu_k)\|^2\right] + 2\Embb\left[\sum_{k=1}^T \alpha_k\|\nabla_x\ftilde(x_k,\mu_k)\|^2\right] \\
        &\leq 2E_2^2\sum_{k=1}^T \alpha_k\mu_k^{2} + 2\Embb\left[\sum_{k=1}^T \alpha_k\|\nabla_x\ftilde(x_k,\mu_k)\|^2\right].
    \end{aligned} 
\end{equation}
Given our choice of $\mu_k=\alpha_k^{-\frac{d}{3}+\frac{2}{3}}$ with~$d\in(1,7/5]$, we observe that~$\lim_{T\rightarrow\infty}E_2^2\sum_{k=1}^T \alpha_k\mu_k^{2}<\infty$. Combining~$\lim_{T\rightarrow\infty}E_2^2\sum_{k=1}^T \alpha_k\mu_k^{2}<\infty$ with~\eqref{lim: noncovex1}, we know that the right-hand side of~\eqref{ieq: non-convex: cor: sub_square} is upper bounded when~$T\rightarrow\infty$. Since~$\sum_{k=1}^T \alpha_k\|g(y_k)\|^2$ is a nonnegative‐term sum, one can conclude that~$\eqref{lim: cor: noncovex1}$ holds.

Next, we prove the second statement of the theorem. Notice that~\eqref{lim: noncovex2} from Theorem~\ref{theorem: non-convex} also holds here. Consider~\eqref{ieq: non-convex: cor: sub_square} again and divide its both sides by~$A_T=\sum_{k=1}^T \alpha_k$, obtaining
\begin{equation} \label{ieq: non-convex: cor: sub/AT}
    \frac{\Embb\left[\sum_{k=1}^T \alpha_k\|g(y_k)\|^2\right]}{A_T} \leq \frac{2E_2^2\sum_{k=1}^T \alpha_k\mu_k^{2}}{A_T} + \frac{2\Embb\left[\sum_{k=1}^T \alpha_k\|\nabla_x\ftilde(x_k,\mu_k)\|^2\right]}{A_T}.
\end{equation}
When~$d \in (1,7/5]$, we use the finiteness of 
$\lim_{T \rightarrow \infty} E_2^2 \sum_{k=1}^T \alpha_k \mu_k^{2}$ 
together with~\eqref{lim: noncovex2} 
to establish the second statement~\eqref{lim: cor: noncovex2} of the theorem. 
\eproof

Similarly, by assuming the computational budget~$T$ is known in advance, we provide a convergence rate for~$g(y_k)$ extending the convergence result of Corollary~\ref{cor: non-convex} as follows.
\bcorollary \label{cor: non-convex-rate}
Under Assumptions~\ref{assumption: unbiasedness}--\ref{assumption: sc: diff} and~\ref{assumption: L-smoothness}--\ref{assumption: subgradient}, for~$d\in(1,7/5]$ and~$b\in(1/d,1)$, by choosing~$\mu_k=\alpha_k^{-\frac{d}{3}+\frac{2}{3}}$ and~$\alpha_k = k^{-b}$, one further has
\begin{equation} \label{eq: nonconvex-rate-min}
\min_{1\le k\le T}\Embb\left[\|g(y_k)\|^2\right]
=
\Ocal\left(T^{-r+\varepsilon}\right),
\end{equation}
where~$r=\min\left\{1-b,\; \frac{2b(2-d)}{3}\right\}$,~$\varepsilon$ is any positive number, and~$g(y_k)$ is a subgradient satisfying~\eqref{ieq: non-convex: f-g}.

Moreover, one can show that the best convergence rate of \eqref{eq: nonconvex-rate-min} can be arbitrarily close to the order of~$T^{-2/7+\varepsilon}$.
\ecorollary
\bproof
Recall that~$\ftilde^T_{\text{best}}=\min_{k=\{0,1,\ldots,T\}}\ftilde(x_k,\mu_k)$. From~\eqref{ieq: non-convex: after variance assumption} together with~\eqref{ieq: non-convex: cor: sub_square}, we obtain
\begin{equation} \label{ieq: nonconvex2 gk1}
\Embb\left[\sum_{k=1}^T \alpha_k\|g(y_k)\|^2\right]
\le
2E_2^2\sum_{k=1}^T \alpha_k\mu_k^2
+ 2C_1\mu_1
+ 2\Embb[\tilde f(x_1,\mu_1)]
+ C_0G^2\sum_{k=1}^T \frac{\alpha_k^2}{\mu_k^3}.
\end{equation}
Given the choice of~$\mu_k=\alpha_k^{\frac{2-d}{3}}$, the inequality~\eqref{ieq: nonconvex2 gk1} can be further written as
\begin{equation} \label{ieq: nonconvex2 gk2}
\Embb\left[\sum_{k=1}^T \alpha_k\|g(y_k)\|^2\right]
\le
2E_2^2\sum_{k=1}^T \alpha_k^{\frac{7-2d}{3}}
+ 2C_1\alpha_1^{\frac{2-d}{3}}
+ 2\Embb[\tilde f(x_1,\mu_1)]
+ C_0G^2\sum_{k=1}^T \alpha_k^d.
\end{equation}

Since $\alpha_k=k^{-b}$ with $b\in(1/d,1)$, recall from Corollary~\ref{coroll: nonconvex rate1} that we have~\eqref{eq: order}. Moreover, one can derive that
\begin{equation} \label{nonconvex fact2}
\sum_{k=1}^T \alpha_k^{\frac{7-2d}{3}}
=
\sum_{k=1}^T k^{-b\frac{7-2d}{3}}
=
\Ocal\left(T^{\max\left\{1-b\frac{7-2d}{3},\,0\right\}}\log T\right).
\end{equation}
Now dividing both sides of~\eqref{ieq: nonconvex2 gk2} by $A_T$, and combining with~\eqref{eq: order} and~\eqref{nonconvex fact2}, we obtain that
\[
\frac{1}{A_T}\Embb\left[\sum_{k=1}^T \alpha_k\|g(y_k)\|^2\right]
=\Ocal\left(T^{-\min\left\{1-b,\;\frac{2b(2-d)}{3}\right\}}\log T\right) + 
\Ocal\left(T^{-(1-b)}\right)
.
\]
By denoting~$r=\min\left\{1-b,\;\frac{2b(2-d)}{3}\right\}$, and for any~$\varepsilon>0$, we obtain
\[
\frac{1}{A_T}\Embb\left[\sum_{k=1}^T \alpha_k\|g(y_k)\|^2\right]
= \Ocal(T^{-r+\varepsilon}).
\]

Finally, since
\[
\min_{1\le k\le T}\Embb\left[\|g(y_k)\|^2\right]
\le
\frac{1}{A_T}\sum_{k=1}^T \alpha_k \Embb\left[\|g(y_k)\|^2\right],
\]
we conclude that
\[
\min_{1\le k\le T}\Embb\left[\|g(y_k)\|^2\right]
=
\Ocal\left(T^{-r+\varepsilon}\right).
\]
 Lastly, one can verify that the exponent~$r$ is bounded above by~$2/7$. In fact, by taking~$d=\frac{7}{5}$ and letting~$b\to \frac{5}{7}^{+}$, one obtains the best achievable rate, in the sense that it is of order~$\Ocal(T^{-2/7+\varepsilon})$ for any arbitrarily small~$\varepsilon>0$.
\eproof

Corollaries~3.1 and~3.3 make the non-convex rate explicit in our setting. We emphasize, however, that these rates are stated in terms of the smoothing gradient and of nearby true subgradients~$g(y_k)$ provided by Assumption~3.9.

We further note that our best rate~$\Ocal(T^{-2/7+\varepsilon})$ can match arbitrarily closely the non-convex rate obtained in~\cite{wang2017stochastic}, at the expense of any small~$\varepsilon>0$. That work, again, considers a related compositional problem in which the non-smoothness arises from the inner function, whereas in our setting it arises from the outer function, and it is based on a different algorithmic framework. From a structural viewpoint, the setting in this study is complementary to ours. In~~\cite{wang2017stochastic}, the main difficulty comes from the nested expectation structure together with the possible non-smoothness of the inner mapping, whereas in our paper the inner function remains differentiable and the non-smoothness appears in the outer function. Accordingly, our method smooths the outer function while preserving the compositional structure, whereas SCGD-type methods rely on auxiliary tracking sequences to approximate the inner expectation. Appendix~C further strengthens this connection by adopting a two-timescale idea inspired by this study for the strongly convex case~$f(x)=\phi(\mathbb{E}[\psi(x,\xi)])$.

\section{Case~$f(x)=\phi(\Embb[\psi(x,\xi)])$} \label{section: compositional}
An objective function of the type~\eqref{problem: f(x)} takes a compositional form in many real-world applications, such as risk-averse optimization. An outer function $\phi(\cdot)$ is composed with an inner function~$\Embb[\psi(\cdot,\xi)]$, leading to the problem formulation~\eqref{prob:1}. In this case, the smoothing function of~$f$ (with~$\psi(x)=\Embb[\psi(x,\xi)]$) is 
$$\ftilde(x,\mu)\;=\; \tilde{\phi}(\psi(x),\mu).$$ And the gradient of~$\nabla_x \ftilde(x,\mu)$ is
\begin{equation*}
   \nabla_x \ftilde(x,\mu) \;=\; \tilde{\phi}'(\psi(x),\mu)\nabla\psi(x). 
\end{equation*}
To compute the smoothing gradient as an unbiased estimator of~$\nabla_x \ftilde(x,\mu)$ (to satisfy Assumption~\ref{assumption: unbiasedness}), we  independently sample two batches~$\xi^1$ and~$\xi^2$ from the random vector~$\xi$ and then compute~$\psi(x_k,\xi^1_k)$ and~$\nabla_x \psi(x_k,\xi^2_k)$ separately. By doing so, the smoothing gradient estimator is written as follows
\begin{equation} \label{eq: grad_ftilde}
    \nabla_x \ftilde(x,\xi^1,\xi^2,\mu) \;=\; \tilde{\phi}'(\psi(x,\xi^1),\mu)\nabla_x \psi(x,\xi^2),
\end{equation}
where~$\tilde{\phi}'(\cdot,\mu)$ denotes the derivative of the smoothing function~$\tilde{\phi}$ evaluated at~$\psi(x,\xi^1)$.
We will show in Proposition~\ref{proposition: compositional: unbias} that~$\nabla_x \ftilde(x,\xi^1,\xi^2,\mu)$ is indeed an unbiased estimator of~$\nabla_x \ftilde(x,\mu)$.

To deal with this compositional setting, we present a particular SSG method in Algorithm~\ref{algorithm: SSG for general compositional}. We assume that the sequences~$\{\alpha_k\}$ and~$\{\mu_k\}$ are both decreasing and converging to zero.
An initial point~$x_1$, along with a sequence of stepsize~$\{\alpha_k\}$, and a sequence of smoothing parameter~$\{\mu_k\}$ are required as input. At each iteration, the algorithm samples realizations~$\xi^1_k$ and~$\xi^2_k$ of the random variable~$\xi$ independently and then computes $\psi(x_k,\xi^1_k)$ and~$\nabla_x \psi(x_k,\xi^2_k)$ separately.
Next, the algorithm computes smoothing gradient~$\nabla_x \ftilde(x,\xi^1,\xi^2,\mu)$ as in~\eqref{eq: grad_ftilde}. This smoothing gradient~$\nabla_x \ftilde(x,\xi^1,\xi^2,\mu)$ is used in a standard stochastic gradient descent update procedure, scaled by the stepsize~$\alpha_k$. Lastly, the SSG algorithm updates the smoothing parameter~$\mu$ at the end of each iteration.

\begin{algorithm}[H]
\caption{SSG method for case~$f(x)=\phi(\Embb[\psi(x,\xi)]
)$} \label{algorithm: SSG for general compositional}
\label{algorithm: ssg}
\begin{algorithmic}[1]
\medskip
\item[] \textbf{Input:} $x_1$,~$\{\alpha_k\}$, and~$\{\mu_k\}$.
\medskip
\item[] \textbf{For} $k = 1,2,\dots$ \textbf{do}
\begin{enumerate}
    \item[]  {\bf Step 1.}
    Generate realizations $\xi^1_k$ and $\xi^2_k$ of the random variable $\xi$. Then compute $
        \psi(x_k,\xi^1_k)$ and~$\nabla_x \psi(x_k,\xi^2_k)$, respectively.
    
    \item[]  {\bf Step 2.} 
    Compute~$\nabla_x \ftilde(x_k,\xi_k^1,\xi_k^2,\mu_k)= \tilde{\phi}'(\psi(x_k,\xi^1_k), \mu_k)\,\nabla_x \psi(x_k,\xi^2_k)$.
    \item[]  {\bf Step 3.} 
    Update iterate~$x_{k+1} = x_k -\alpha_k\nabla_x \ftilde(x_k,\xi_k^1,\xi_k^2,\mu_k)$.
    \item[]  {\bf Step 4.} 
    Update smoothing parameter~$\mu_{k+1} \leq \mu_k$.
\end{enumerate}
\item[] \textbf{End do}
\end{algorithmic}
\end{algorithm}

In this section, we will investigate under what properties of $\phi$ or $\tilde{\phi}$ and $\psi$ the assumptions needed in Section~\ref{section: general} are met. 
\subsection{About convexity} 



In many real-world optimization problems under uncertainty, the non-smoothness of the outer function~$\phi$ arises from absolute values. For example, problems that we are looking at could be of the form~$|\Embb[\psi(x,\xi)]|$. Consider a machine learning fairness scenario as an instance, where the inner function~$\psi$ measures the covariance between a prediction and a sensitive attribute under a random data sample~$\xi$, and the objective~$|\Embb[\psi(x,\xi)]|$ penalizes the magnitude of the expected fairness violation.

To smooth the outer function~$\phi$ with an absolute-value type function~$|\cdot|$, we can use smoothing function provided in Section~\ref{section: smoothing function}. In such a case,~$\tilde{\phi}(t,\mu)$ is convex with respect to~$t$. Then, if one has that~$\psi(x)=\Embb[\psi(x,\xi)]$ is affine in~$x$, as often occurs when~$\psi(x)$ represents a linear predictor in machine learning settings, the convexity assumption of the compositional smoothing function~$\ftilde(x,\mu)=\tilde{\phi}(\psi(x),\mu)$ is satisfied.

\subsection{About stochasticity}
Next, we will show that the bounded second moment assumption given in Section~\ref{section: general} (Assumption~\ref{assumption: bounded variance}) can be met in this compositional case. For this purpose, we first require that the second moment of the gradient of the inner function be bounded in expectation. Such an assumption is typically met in machine learning models (for example, linear regression and logistic regression)  with bounded feasible regions~$\Xcal\subset\Rmbb^n$.
\bassumption \label{assumption: compositional: nabla_psi} 
For all~$x\in\Xcal$, the gradient of the inner function~$\psi(x,\xi)$ has a bounded second moment
    \begin{equation*}
        \Embb[\|\nabla_x \psi(x,\xi)\|^2] \;\leq\; G_1^2,
    \end{equation*}
for some~$G_1>0$.
\eassumption

We also need to assume that the expected squared derivative of $\tilde{\phi}$ does not exceed a positive quantity that inversely depends on the smoothing parameter~$\mu$.

\begin{assumption} \label{proposition: compositional: tilde_phi}
For all~$x\in\Xcal$ and for all~$\mu>0$, the gradient of~$\Tilde{\phi}(\psi(x,\xi),\mu)$ satisfies
    \begin{equation*}
        \Embb[\Tilde{\phi}'(\psi(x,\xi),\mu)^2] \;\leq\; \left(\frac{G_2}{\mu}\right)^2,
    \end{equation*}
for some~$G_2>0$.
\end{assumption}

We note that Assumption~\ref{proposition: compositional: tilde_phi} is satisfied when using smoothing functions $\tilde{\phi}$ of the types shown in Section~\ref{section: smoothing function} (see~\eqref{function: |.|} and~\eqref{function: Hinge}), which correspond to functions of type~$\lvert\cdot\rvert$ and~$\max\{0,\cdot\}$. In such cases,~$\phi'(\psi(x,\xi),\mu)$ is a multiple of~$1/\mu$ of~$\psi(x,\xi)$, and Assumption~\ref{proposition: compositional: tilde_phi} is met as long as~$\psi(x,\xi)$ is bounded.
Hence, Assumption~\ref{proposition: compositional: tilde_phi} is a combination of the boundness of $\psi(x,\xi)$ and the properties of the smoothing function~$\tilde{\phi}(\cdot,\mu)$.

Using these two assumptions, we can ensure the bounded second moment of the smoothing gradient and thus confirm that Assumption~\ref{assumption: bounded variance} is valid in this compositional setting.
\begin{proposition} Suppose Assumptions \ref{assumption: compositional: nabla_psi} and~\ref{proposition: compositional: tilde_phi} hold. Additionally, assume that~$\psi(x,\xi^1)$ is sampled independently from $\nabla_x \psi(x,\xi^2)$. Then, the smoothing gradient satisfies 
\begin{equation*}         \Embb[\|\Tilde{\phi}'(\psi(x,\xi^1),\mu)\nabla_x\psi(x,\xi^2)\|^2] \;\leq\; \left(\frac{G_1G_2}{\mu}\right)^2.  
\end{equation*} \end{proposition}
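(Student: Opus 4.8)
The plan is to exploit the independence of the two samples to factor the expectation of the squared norm. First, I would write the smoothing gradient estimator as a scalar times a vector, namely $\Tilde{\phi}'(\psi(x,\xi^1),\mu)\nabla_x\psi(x,\xi^2)$, and observe that since $\Tilde{\phi}'(\psi(x,\xi^1),\mu)$ is a scalar, the squared Euclidean norm satisfies
\begin{equation*}
\|\Tilde{\phi}'(\psi(x,\xi^1),\mu)\nabla_x\psi(x,\xi^2)\|^2 \;=\; \Tilde{\phi}'(\psi(x,\xi^1),\mu)^2\,\|\nabla_x\psi(x,\xi^2)\|^2.
\end{equation*}
This is the key structural observation that turns a norm of a product into a product of two nonnegative quantities, each depending on only one of the two samples.

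Next, I would take the expectation over both $\xi^1$ and $\xi^2$. Because $\xi^1$ and $\xi^2$ are sampled independently, the two factors above are independent random variables, so the expectation of their product equals the product of their expectations:
\begin{equation*}
\Embb[\Tilde{\phi}'(\psi(x,\xi^1),\mu)^2\,\|\nabla_x\psi(x,\xi^2)\|^2] \;=\; \Embb[\Tilde{\phi}'(\psi(x,\xi^1),\mu)^2]\;\Embb[\|\nabla_x\psi(x,\xi^2)\|^2].
\end{equation*}
Since $\xi^1$ and $\xi^2$ are both drawn from the distribution of $\xi$, the first factor is bounded by $(G_2/\mu)^2$ via Assumption~\ref{proposition: compositional: tilde_phi} and the second factor is bounded by $G_1^2$ via Assumption~\ref{assumption: compositional: nabla_psi}.

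Finally, I would combine these two bounds to conclude
\begin{equation*}
\Embb[\|\Tilde{\phi}'(\psi(x,\xi^1),\mu)\nabla_x\psi(x,\xi^2)\|^2] \;\leq\; \left(\frac{G_2}{\mu}\right)^2 G_1^2 \;=\; \left(\frac{G_1G_2}{\mu}\right)^2,
\end{equation*}
which is the desired inequality. The only genuine subtlety here—and the step I would be most careful about—is the factorization of the expectation, which relies essentially on the independence hypothesis; without it one would instead need a Cauchy--Schwarz argument that would not cleanly yield the product of the two stated bounds. Everything else is a routine substitution of the two assumed moment bounds, so I do not expect any real obstacle beyond being explicit about where independence is invoked.
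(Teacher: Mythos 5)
Your proposal is correct and follows exactly the paper's own argument: pull the scalar out of the norm, factor the expectation by the independence of $\xi^1$ and $\xi^2$, and then apply the two moment bounds. No differences worth noting.
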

\begin{proof}
    By the definition of the smoothing gradient estimate~\eqref{eq: grad_ftilde}, and since~$\tilde{\phi}'(\psi(x,\xi^1),\mu)$ is a scalar, together with the independence of~$\xi^1$ and~$\xi^2$, we have the following equalities
    \begin{equation*}
        \begin{aligned}
\mathbb{E}[\|\tilde{\phi}'(\psi(x,\xi^1),\mu) \nabla_x \psi(x,\xi^2)\|^2] &= \mathbb{E}[|\tilde{\phi}'(\psi(x,\xi^1),\mu)|^2 \|\nabla_x \psi(x,\xi^2)\|^2] \\
&= \mathbb{E}_{\xi^1}[|\tilde{\phi}'(\psi(x,\xi^1),\mu)|^2] \mathbb{E}_{\xi^2}[\|\nabla_x \psi(x,\xi^2)\|^2].
        \end{aligned}
    \end{equation*}
The final bound can be determined from Assumptions~\ref{assumption: compositional: nabla_psi} and \ref{proposition: compositional: tilde_phi}.
\end{proof}

Having established the bounded second moment of the smoothing gradient, we now proceed to demonstrate the unbiasedness condition needed for the smoothing gradient estimator, where the linearity of~$\Tilde{\phi}'(\cdot,\mu)$ is required in the proof of Proposition~4.2 (to confirm that Assumption~\ref{assumption: unbiasedness} can also be satisfied in this setting). Notice that since the smoothing technique considered for the non-smooth kink point of the absolute-value function yields a quadratic smoothing function, its derivative~$\Tilde{\phi}'(\cdot,\mu)$ is linear in the smoothing region (see~\eqref{function: |.|} for the $L_1$ loss). Thus, in our setting, for Step~3 in Algorithm~2, if the sampling of~$\xi_k^1$ at each iteration~$k$ is restricted so that~$\psi(x_k,\xi_k^1)$ falls within the smoothing region of~$\Tilde{\phi}'(\cdot,\mu_k)$ almost surely, then~$\Tilde{\phi}'(\psi(x_k,\xi_k^1),\mu_k)$ is evaluated in its linear region. Moreover, to ensure that this restriction does not introduce bias in the estimation of~$\psi(x_k)$, we require the restricted sampling rule to preserve the mean, namely~$\Embb_{\mathrm{res}}[\psi(x_k,\xi_k^1)] = \psi(x_k)$, where~$\Embb_{\mathrm{res}}$ denotes expectation under the restricted sampling rule. Under this condition, the restricted sample~$\psi(x_k,\xi_k^1)$ remains an unbiased estimator of~$\psi(x_k)$. We note, however, that this argument does not extend directly to the hinge case, since the corresponding smoothing region is one-sided and the same mean-preservation property is not naturally guaranteed by symmetry. 

\begin{proposition} \label{proposition: compositional: unbias}
     Suppose Assumptions \ref{assumption: compositional: nabla_psi} and~\ref{proposition: compositional: tilde_phi} hold. Let~$\Tilde{\phi}'(t,\mu)$ be a linear function in~$t$. Assuming the unbiasedness of~$\psi(x,\xi^1)$ and~$\nabla\psi(x,\xi^2)$, the smoothing gradient~$\nabla_x \ftilde(x,\xi^1,\xi^2,\mu)=\Tilde{\phi}'(\psi(x,\xi^1),\mu)\nabla_x\psi(x,\xi^2)$ serves as an unbiased stochastic estimator of $\nabla_x \ftilde(x,\mu)$, meaning that
    \begin{equation*} 
        \nabla_x \ftilde(x,\mu) \;=\; \Embb [\Tilde{\phi}'(\psi(x,\xi^1),\mu)\nabla_x\psi(x,\xi^2)].
    \end{equation*}   
\end{proposition}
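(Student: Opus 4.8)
The plan is to compare the expectation of the estimator in~\eqref{eq: grad_ftilde} against the chain-rule expression $\nabla_x\ftilde(x,\mu)=\tilde{\phi}'(\psi(x),\mu)\nabla\psi(x)$, where $\psi(x)=\Embb[\psi(x,\xi)]$, and to reduce the identity to two separate factors. First I would exploit that $\tilde{\phi}'(\psi(x,\xi^1),\mu)$ is a scalar depending only on the sample $\xi^1$, while $\nabla_x\psi(x,\xi^2)$ is a vector depending only on the independent sample $\xi^2$. Independence then factors the expectation componentwise,
\begin{equation*}
\Embb[\tilde{\phi}'(\psi(x,\xi^1),\mu)\,\nabla_x\psi(x,\xi^2)] \;=\; \Embb_{\xi^1}[\tilde{\phi}'(\psi(x,\xi^1),\mu)]\;\Embb_{\xi^2}[\nabla_x\psi(x,\xi^2)].
\end{equation*}
To make the factorization legitimate I would first check integrability of the product, which is immediate from the second-moment bound $(G_1G_2/\mu)^2$ established in the preceding proposition together with Cauchy--Schwarz.

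Next I would dispose of the vector factor, which is the routine part. By the assumed unbiasedness of $\nabla\psi(x,\xi^2)$ we have $\Embb_{\xi^2}[\nabla_x\psi(x,\xi^2)]=\nabla_x\Embb[\psi(x,\xi)]=\nabla\psi(x)$; should the underlying interchange of $\nabla_x$ and $\Embb$ require certification, the dominated convergence theorem supplies it, using Assumption~\ref{assumption: compositional: nabla_psi} to dominate the difference quotients by an integrable function. This factor therefore reproduces exactly the $\nabla\psi(x)$ appearing in $\nabla_x\ftilde(x,\mu)$.

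The crux, and the step I expect to be the main obstacle, is the scalar factor: the identity closes only if $\Embb_{\xi^1}[\tilde{\phi}'(\psi(x,\xi^1),\mu)]=\tilde{\phi}'(\Embb[\psi(x,\xi^1)],\mu)=\tilde{\phi}'(\psi(x),\mu)$. This is precisely a pull-through of the expectation across the nonlinear map $t\mapsto\tilde{\phi}'(t,\mu)$, and unbiasedness of $\psi(x,\xi^1)$ (which only yields $\Embb[\psi(x,\xi^1)]=\psi(x)$) does not by itself license it; for instance, the uniform smoothing of $|\cdot|$ has the clipped derivative $t\mapsto\max\{-1,\min\{1,2t/\mu\}\}$, which is not affine, so by Jensen the two sides generically differ. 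I would therefore expect the argument to need an extra structural hypothesis, namely that $\tilde{\phi}'(\cdot,\mu)$ be affine on the range of $\psi(x,\xi^1)$ (or that $\psi(x,\xi^1)$ be almost surely constant in $\xi^1$); under such a condition the expectation passes through the linear map, the scalar factor collapses to $\tilde{\phi}'(\psi(x),\mu)$, and multiplying the two factors gives $\Embb[\nabla_x\ftilde(x,\xi^1,\xi^2,\mu)]=\tilde{\phi}'(\psi(x),\mu)\nabla\psi(x)=\nabla_x\ftilde(x,\mu)$. I would make this supplementary assumption explicit, since the three hypotheses as listed are not sufficient on their own.
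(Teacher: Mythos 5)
Your analysis is essentially correct, and the obstacle you flag is real. The paper's own proof takes exactly your first two steps --- it factors the expectation using independence of $\xi^1$ and $\xi^2$, obtaining $\Embb_{\xi^1}[\tilde{\phi}'(\psi(x,\xi^1),\mu)]\,\Embb_{\xi^2}[\nabla_x\psi(x,\xi^2)]$ --- and then simply asserts that ``the dominated convergence theorem and the unbiasedness of $\psi(x,\xi^1)$ and $\nabla\psi(x,\xi^2)$'' complete the argument. That closing sentence silently commutes the expectation with the nonlinear map $t\mapsto\tilde{\phi}'(t,\mu)$, i.e.\ it uses $\Embb_{\xi^1}[\tilde{\phi}'(\psi(x,\xi^1),\mu)]=\tilde{\phi}'(\Embb_{\xi^1}[\psi(x,\xi^1)],\mu)$, which is precisely the step you identify as unjustified. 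Unbiasedness of $\psi(x,\xi^1)$ gives only $\Embb[\psi(x,\xi^1)]=\psi(x)$, and the dominated convergence theorem is irrelevant to pulling an expectation through a nonlinear function. Your Jensen-type objection is decisive: for the uniform smoothing \eqref{function: |.|} the derivative is the clipped map $t\mapsto\max\{-1,\min\{1,2t/\mu\}\}$, and taking $\psi(x,\xi^1)$ equal to $0$ or $\mu$ with probability $1/2$ each gives $\Embb[\tilde{s}'(\psi(x,\xi^1),\mu)]=1/2$ while $\tilde{s}'(\Embb[\psi(x,\xi^1)],\mu)=\tilde{s}'(\mu/2,\mu)=1$.

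So you have not missed an idea from the paper; you have located a genuine gap in the paper's own proof, and indeed in the proposition as stated. The hypotheses listed (second-moment bounds plus unbiasedness of the inner function and its gradient) are not sufficient. Your proposed repairs --- requiring $\tilde{\phi}'(\cdot,\mu)$ to be affine on the essential range of $\psi(x,\xi^1)$, or requiring $\psi(x,\xi^1)$ to be almost surely constant (e.g.\ a deterministic or fully evaluated inner function) --- are legitimate sufficient conditions under which the scalar factor collapses to $\tilde{\phi}'(\psi(x),\mu)$ and the identity holds. Absent such a condition, the estimator \eqref{eq: grad_ftilde} is an unbiased estimator of $\Embb_{\xi^1}[\tilde{\phi}'(\psi(x,\xi^1),\mu)]\,\nabla\psi(x)$ but carries a bias of order governed by the curvature of $\tilde{\phi}'(\cdot,\mu)$ (which scales like $1/\mu$) times the variance of $\psi(x,\xi^1)$, and that bias would have to be tracked separately in the convergence analysis.
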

\begin{proof} 
Again, since we sample~$\xi^1$ and~$\xi^2$ independently, we have
\begin{equation} \label{eq: prop unbias}
    \Embb [\Tilde{\phi}'(\psi(x,\xi^1),\mu)\nabla_x\psi(x,\xi^2)] = \Embb_{\xi^1} [\Tilde{\phi}'(\psi(x,\xi^1),\mu)]\Embb_{\xi^2}[\nabla_x\psi(x,\xi^2)]
\end{equation}
    Then using the lineality of~$\Tilde{\phi}'(\cdot,\mu)$ and the unbiasedness of~$\psi(x,\xi^1)$ and~$\nabla\psi(x,\xi^2)$ on \eqref{eq: prop unbias} completes the proof.
\end{proof}

\subsection{About smoothing}
Lastly, to fit the convergence theory that we discussed in Section~\ref{subsection: convex} into this compositional case, we also need to make sure that~$\ftilde$ is a smoothing function with an accuracy of the order of~$\mu$. This is guaranteed in Proposition~\ref{proposition: compositional: diff} as long as the smoothing function~$\tilde{\phi}$ satisfies the same property (Assumption~\ref{assumption: compositional: diff}).
\bassumption \label{assumption: compositional: diff}
For all $y\in\Rmbb$ and for all $\mu>0$, the difference between $\phi(y)$ and $\Tilde{\phi}(y,\mu)$ is bounded by some constant times the smoothing parameter in the following way
\begin{equation*}
    |\phi(y)-\Tilde{\phi}(y,\mu)|\;\leq\; C\mu,
\end{equation*}
where~$C>0$ is some positive constant.
\eassumption
\begin{proposition} \label{proposition: compositional: diff}
       Under Assumption \ref{assumption: compositional: diff}, for all~$\mu>0$, the smoothing function~$\ftilde(x,\mu)$ satisfies Assumption~\ref{assumption: sc: diff}.
\end{proposition}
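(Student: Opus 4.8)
The plan is to reduce Assumption~\ref{assumption: sc: diff} directly to Assumption~\ref{assumption: compositional: diff} by a pointwise substitution, exploiting the fact that in this compositional setting the smoothing acts only on the outer function. Recall that here $f(x) = \phi(\psi(x))$ with $\psi(x) = \Embb[\psi(x,\xi)]$, and the compositional smoothing function is $\ftilde(x,\mu) = \tilde{\phi}(\psi(x),\mu)$. Hence the quantity we must bound is
\[
    |f(x) - \ftilde(x,\mu)| \;=\; |\phi(\psi(x)) - \tilde{\phi}(\psi(x),\mu)|.
\]

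First I would fix an arbitrary $x \in \Rmbb^n$ and $\mu > 0$, and observe that $\psi(x) = \Embb[\psi(x,\xi)]$ is a scalar point in $\Rmbb$. This is essentially the only thing that needs checking: that the common argument fed into both $\phi$ and $\tilde{\phi}$ lies in the domain $\Rmbb$ on which Assumption~\ref{assumption: compositional: diff} is postulated. With this in hand, applying Assumption~\ref{assumption: compositional: diff} with $y = \psi(x)$ yields immediately
\[
    |\phi(\psi(x)) - \tilde{\phi}(\psi(x),\mu)| \;\leq\; C\mu,
\]
which is exactly the accuracy bound required by Assumption~\ref{assumption: sc: diff}. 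Since $x$ and $\mu$ were arbitrary, the estimate holds uniformly, completing the argument.

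I expect no genuine obstacle here: the result is a one-step substitution rather than an estimate requiring any structural work. The reason the proof is so short is precisely that the composition does not interact with the smoothing parameter---the inner function $\psi$ merely reparameterizes the point at which $\phi$ and $\tilde{\phi}$ are compared, so the accuracy of the outer smoothing $\tilde{\phi}$ propagates verbatim to $\ftilde$ with the same constant $C$, with no additional Lipschitz or scaling factors entering. The only mildly subtle point worth stating explicitly is this domain-matching observation, namely that $\psi(x) \in \Rmbb$ so that the hypothesis of Assumption~\ref{assumption: compositional: diff} applies; everything else is immediate.
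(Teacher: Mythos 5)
Your proof is correct and takes essentially the same route as the paper's: both simply substitute $y = \psi(x)$ into Assumption~\ref{assumption: compositional: diff} to obtain $|f(x)-\ftilde(x,\mu)| = |\phi(\psi(x))-\tilde{\phi}(\psi(x),\mu)| \leq C\mu$. The domain-matching remark you add is harmless but not needed beyond what the paper states.
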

\begin{proof} 
By Assumption~\ref{proposition: compositional: diff}, the smoothing accuracy assumption of~$\ftilde(x,\mu)$ is naturally satisfied as follows
\begin{equation*}
    |f(x)-\ftilde(x,\mu)| = |\phi(\psi(x))-\tilde{\phi}(\psi(x),\mu)|\leq C\mu.
\end{equation*}
 \end{proof}

Notice that as a consequence of Proposition~\ref{proposition: compositional: diff}, one can infer that $\ftilde(x,\mu)$ is indeed a smoothing function for $f(x)$ (see Definition~\ref{def: smoothing funtion}).

Building upon established assumptions and propositions of this section, we have demonstrated that the SSG method and its convergence results hold in a convex compositional setting of type $f(x)=\phi(\Embb[\psi(x,\xi)])$. Specifically, Proposition~\ref{assumption: compositional: nabla_psi} and~\ref{proposition: compositional: tilde_phi} ensure the bounded second moment and unbiasedness of the smoothing gradient estimates, and Proposition~\ref{assumption: compositional: diff} ensures the accuracy of the smoothing function. Consequently, our analysis paves the way for practical application in diverse optimization problems characterized by such a convex compositional structure. 

\section{Finite-sum compositional case} \label{section: finite-sum}
In this section, we will focus on case~$f(x)=\Embb[\phi(\psi(x,\xi))]$. In particular, we are interested in a finite-sum compositional structure because of its wide applicability. In many real-world applications, particularly in machine learning and large-scale optimization, the objective function naturally takes a finite-sum compositional structure. The finite-sum compositional objective function is defined in the following way
\begin{equation} \label{eq: finite-sum: f(x)}
    f(x) \;=\; \frac{1}{N} \sum_{i=1}^N \phi(\psi(x,\xi_i)),
\end{equation}
where $N \in \mathbb{N}_+$ denotes the number of components and $\xi_i$ represents the $i$-th data sample. Note that this finite-sum compositional function~\eqref{eq: finite-sum: f(x)} approximate function~$f(x)=\Embb[\phi(\psi(x,\xi))]$ using finite samples.

We then define the smoothing function~$\ftilde$ of the finite-sum compositional function~$f$ as follows
\begin{equation*} 
    \tilde{f}(x,\mu) \;=\; \frac{1}{N} \sum_{i=1}^N \tilde{\phi}(\psi(x,\xi_i),\mu),
\end{equation*}
where $\tilde{\phi}(\psi(x,\xi_i),\mu)$ serves as a smoothing function of $\phi(\psi(x,\xi_i))$ for each $i \in \{1,2,\ldots,N\}$. To compute the stochastic gradient in this finite-sum setting, we use a stochastic batch gradient approach
\begin{equation} \label{eq: finite-sum: gradient compute}
    \tilde{g}(x_k,\mu_k) \;=\; \frac{1}{|B_k|}\sum_{i \in B_k} \tilde{\phi}'(\psi(x_k,\xi_i),\mu_k)\nabla_x \psi(x_k,\xi_i),
\end{equation}
where $B_k$ denotes the sample batches. As a result, the update rule of the SSG method for the finite-sum compositional scenario is formulated as~$x_{k+1} = x_k - \alpha_k \tilde{g}(x_k,\mu_k)$.

After pointing out the modifications of the SSG method for the finite-sum compositional setting, we now present the corresponding particular method in Algorithm~$\ref{algorithm: SSG for finite sum compositional}$.

\begin{algorithm}[H]
    \caption{SSG method for the finite-sum compositional case} \label{algorithm: SSG for finite sum compositional}
\begin{algorithmic}[1]
\medskip
\item[] \textbf{Input:} $x_1$,~$\{\alpha_k\}$, and~$\{\mu_k\}$.
\medskip
\item[] \textbf{For} $k = 1,2,\dots$ \textbf{do}
\begin{enumerate}
    \item[]  {\bf Step 1.}
    Select a batch~$B_k$ from~$\{1,2,\dots,N\}$. For all $i\in B_k$, compute~$\psi(x_k,\xi_i)$ and~$\nabla_x \psi(x_k,\xi_i)$.
    
    \item[]  {\bf Step 2.} 
    Use~\eqref{eq: finite-sum: gradient compute} to compute the stochastic batch smoothing gradient~$\tilde{g}(x_k,\mu_k)$.
    \item[]  {\bf Step 3.}
    Update iterate~$x_{k+1} = x_k -\alpha_k\gtilde(x_k,\mu_k)$.
    \item[]  {\bf Step 4.}
    Update smoothing parameter~$\mu_{k+1} \leq \mu_k$.
\end{enumerate}
\item[] \textbf{End do}
\end{algorithmic}
\end{algorithm}

\subsection{About convexity} ~\label{subsec: aboutconvexity}
In the context of machine learning, the outer function
$\phi(\cdot)$ often represents a loss function that measures the discrepancies between
predictions and observed data. Many commonly used loss functions, such as $L$1 loss, hinge loss, and quantile loss, are convex but non-smooth. In these cases, by selecting an appropriate smoothing function $\tilde{\phi}(\cdot,\mu)$, it is possible to reformulate the compositional problem to fit the convex framework discussed in Section~\ref{section: general}.

For example, consider the setting where the outer function is the hinge loss. Let $\xi = (u, v)$ represent a pair of features and labels. The inner function is a linear function defined as $\psi(x,\xi) = x^\T u + b - v$. Suppose we have $N$ realizations of~$\xi = (u, v)$ denoted by $\xi_i=(u_i,v_i)$, where~$i\in\{1,\ldots,N\}$. In such a setting, the problem can be expressed as 
\begin{equation*} \label{eq: compostional: example1}
    f(x) \;=\; \frac{1}{N} \sum_{i=1}^N \phi(\psi(x,\xi_i)) \;=\; \frac{1}{N} \sum_{i=1}^N \max\{0,x^\T u_i + b - v_i\}.
\end{equation*} 
A popular smoothing function for~$\max\{0,\cdot\}$ was described in Section~\ref{section: smoothing function} (see \eqref{function: Hinge}). In such a case,~$\tilde{\phi}(t,\mu)$ is nondecreasing with respect to nonnegative~$t$. 
If the inner function~$\psi$ is convex, then for any realization~$\xi_i$ of $\xi$,~$\tilde{\phi}(\psi(x,\xi_i),\mu)$ is convex. Consequently, the sum of convex functions in~$\tilde{f}(x,\mu) = \frac{1}{N} \sum_{i=1}^N \tilde{\phi}(\psi(x,\xi_i),\mu)$ is convex and Assumption~\ref{assumption: convex} is guaranteed.

Another example is the~$L$1 loss, where we have 
\begin{equation*}
    f(x) \;=\; \frac{1}{N} \sum_{i=1}^N \phi(\psi(x,\xi_i)) \;=\; \frac{1}{N} \sum_{i=1}^N |x^\T u_i + b - v_i|.
\end{equation*}
By using the smoothing function described in Section~\ref{section: smoothing function} (see \eqref{function: |.|}), if~$\psi(x,\xi_i)$ is always evaluated with nonnegative values, we can also obtain the convexity of $\tilde{\phi}(\psi(x,\xi_i),\mu)$, therefore also meet the convexity of~$\ftilde(x,\mu)$ required in Assumption~\ref{assumption: convex}.

\subsection{About stochasticity}
In this subsection, we introduce the specific assumptions and propositions relevant to the finite-sum compositional case, thereby showing that this case can be fitted into the convergence result established in Section~\ref{subsection: convex}. Notice that the sampling rule used here differs from the one in Section~4 because the two problem classes have different structures. In Section~4, independent samples are needed to preserve unbiasedness, whereas in the finite-sum case considered here, uniform mini-batch sampling is sufficient to obtain an unbiased stochastic estimator. We now state this requirement formally in Assumption~\ref{assumption: finite-sum: batch}.
\bassumption \label{assumption: finite-sum: batch}
The mini-batch $B$ is sampled uniformly at random from $\{1,2,\ldots,N\}$, with each index $i$ equally likely to be selected without replacement.
\eassumption

This standard uniform sampling mechanism is crucial for maintaining the unbiasedness of the stochastic gradient estimates derived from the mini-batch. Thus, under Assumption \ref{assumption: finite-sum: batch}, we have the following proposition.
\begin{proposition} \label{proposition: finite-sum: unbiasedness}
    Under Assumption \ref{assumption: finite-sum: batch}, the smoothing gradient $\gtilde(x,\mu)$ given in~\eqref{eq: finite-sum: gradient compute} serves as an unbiased stochastic estimator of~$\nabla_x \ftilde(x,\mu)$, satisfying
    \begin{equation*}
        \nabla_x \ftilde(x,\mu) \;=\; \Embb[\gtilde(x,\mu)].
    \end{equation*}
\end{proposition}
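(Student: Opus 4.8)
The plan is to reduce the claim to a direct computation of the probability that a fixed index belongs to the random mini-batch. Since the data points $\xi_1,\ldots,\xi_N$ are fixed in the finite-sum setting, the quantities $h_i \coloneq \tilde{\phi}'(\psi(x,\xi_i),\mu)\nabla_x\psi(x,\xi_i)$ are deterministic once $x$ and $\mu$ are fixed, so the only source of randomness is the choice of the batch $B$. Accordingly, I would first rewrite both objects in terms of the $h_i$: the full smoothing gradient is $\nabla_x \ftilde(x,\mu) = \frac{1}{N}\sum_{i=1}^N h_i$, while the batch estimator in \eqref{eq: finite-sum: gradient compute} can be expressed with indicator variables as $\gtilde(x,\mu) = \frac{1}{|B|}\sum_{i=1}^N \mathbbm{1}[i\in B]\,h_i$.

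Next I would apply linearity of expectation to pull $\Embb$ inside the sum, which gives $\Embb[\gtilde(x,\mu)] = \frac{1}{|B|}\sum_{i=1}^N \Pr[i\in B]\,h_i$. The crucial step is then to evaluate $\Pr[i\in B]$ under Assumption~\ref{assumption: finite-sum: batch}: because the batch of size $|B|$ is drawn uniformly at random without replacement from $\{1,\ldots,N\}$, every index is equally likely to appear, and a short counting argument over the $\binom{N}{|B|}$ equally likely batches (a fixed index lies in $\binom{N-1}{|B|-1}$ of them) yields $\Pr[i\in B] = |B|/N$ for each $i$. Substituting this value, the factors of $|B|$ cancel and one recovers $\frac{1}{N}\sum_{i=1}^N h_i = \nabla_x \ftilde(x,\mu)$, which is exactly the claimed identity.

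There is no serious obstacle in this argument; the only point requiring care is the computation of the inclusion probability $\Pr[i\in B]=|B|/N$, where one must genuinely use the uniform without-replacement hypothesis rather than treating the indices as independently included (which would produce a different normalization and break the cancellation). It is also worth noting that the conclusion holds for any fixed batch size $|B|$, so the estimator is unbiased regardless of the mini-batch size; this is the feature that lets the finite-sum case inherit Assumption~\ref{assumption: unbiasedness} and hence the convergence theory of Section~\ref{subsection: convex}.
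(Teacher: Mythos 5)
Your proposal is correct and takes essentially the same route as the paper: both arguments rest on the fact that uniform without-replacement sampling gives every index the inclusion probability $|B|/N$, so the $1/|B|$ normalization cancels and the batch average is unbiased for the full average. The only difference is one of explicitness — the paper states this and defers the computation to "standard results" with a citation, whereas you carry out the indicator-variable decomposition and the $\binom{N-1}{|B|-1}/\binom{N}{|B|}$ counting argument in full, which is a welcome (and entirely consistent) filling-in of the details.
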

\begin{proof} By the chain rule, we can express $\nabla_x \ftilde(x,\mu)$ as \begin{equation*} \nabla_x \tilde{f}(x,\mu) = \frac{1}{N} \sum_{i=1}^N \tilde{\phi}'(\psi(x,\xi_i), \mu) \nabla_x \psi(x,\xi_i).
\end{equation*} The stochastic batch gradient estimator is given by~\eqref{eq: finite-sum: gradient compute}. By taking the expectation over the random sampling of 
$B_k$ and utilizing Assumption \ref{assumption: finite-sum: batch}, which guarantees a uniform selection probability for each index $i$, the unbiasedness directly follows from standard results in stochastic gradient methods (see, e.g., \cite{bottou2018optimization}).
\end{proof}

Assumption \ref{assumption: finite-sum: nabla_psi} ensures that the gradient of every inner function $\psi(x,\xi_i)$ has a bounded second moment.
\bassumption \label{assumption: finite-sum: nabla_psi}
For all $x\in\Xcal$, the gradient of $\psi(x,\xi_i)$ is bounded for each $i\in\{1,2,\ldots,N\}$ by
    \begin{equation*}
        \Embb[\|\nabla_x \psi(x,\xi_i)\|^2] \;\leq\; G_1^2,
    \end{equation*}
for some $G_1>0$.
\eassumption

Similar to Section~\ref{section: compositional}, Assumption \ref{assumption: finite-sum: tilde_phi'} guarantees that, given the boundedness of the decision variables, the expected squared derivative of the smoothing function $\tilde{\phi}$ is controlled by a number that depends inversely on the smoothing parameter $\mu$.
\bassumption \label{assumption: finite-sum: tilde_phi'}
For all $\mu>0$, the gradient of $\Tilde{\phi}(\psi(x,\xi_i),\mu)$ satisfies
    \begin{equation*}
        E[\Tilde{\phi}'(\psi(x,\xi_i),\mu)^2] \;\leq\; \left(\frac{G_2}{\mu}\right)^2,
    \end{equation*}
for some $G_2>0$.
\eassumption

Proposition \ref{proposition: finite-sum: bounded graident} states that the second moment of the smoothing stochastic batch gradient is bounded under the assumptions listed above.
\begin{proposition} \label{proposition: finite-sum: bounded graident}
    Suppose Assumption \ref{assumption: finite-sum: batch}--\ref{assumption: finite-sum: tilde_phi'} hold. Additionally, assume that~$\psi(x,\xi_i)$ is sampled independently from $\nabla_x \psi(x,\xi_i)$ for each $i$. Then, the smoothing gradient estimate satisfies the following condition
    \begin{equation} \label{prop: finite-sum: bound}
    \begin{aligned}
        E[\|\gtilde(x,\mu)\|^2] 
        &\;\leq\;  \left(\frac{G_1G_2}{\mu}\right)^2.
    \end{aligned}
    \end{equation}
\end{proposition}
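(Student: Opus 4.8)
The plan is to reduce the batched estimator to its per-sample building blocks, bound each block using the two moment assumptions, and then absorb the batch averaging through convexity. Writing $g_i \coloneq \tilde{\phi}'(\psi(x,\xi_i),\mu)\,\nabla_x\psi(x,\xi_i)$, the estimator in~\eqref{eq: finite-sum: gradient compute} is the uniform average $\gtilde(x,\mu) = \frac{1}{|B|}\sum_{i\in B} g_i$ over the randomly drawn batch $B$.

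First I would deal with the sum inside the norm. Since $\|\cdot\|^2$ is convex, Jensen's inequality applied to the average gives
\begin{equation*}
    \left\|\frac{1}{|B|}\sum_{i\in B} g_i\right\|^2 \;\leq\; \frac{1}{|B|}\sum_{i\in B}\|g_i\|^2.
\end{equation*}
This is the crucial step: it discards the cross terms $\langle g_i, g_j\rangle$ that would otherwise appear upon expanding the squared norm and whose sign is not controlled (note in particular that under without-replacement sampling the members of $B$ are not mutually independent, so one cannot simply zero out these cross terms). It also makes the resulting bound independent of $|B|$, as required.

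Next I would bound each summand. Using the scalar--vector structure, $\|g_i\|^2 = \tilde{\phi}'(\psi(x,\xi_i),\mu)^2\,\|\nabla_x\psi(x,\xi_i)\|^2$, and the assumed independence of the sample used for $\psi(x,\xi_i)$ from the one used for $\nabla_x\psi(x,\xi_i)$ lets the expectation factor as a product. Assumptions~\ref{assumption: finite-sum: tilde_phi'} and~\ref{assumption: finite-sum: nabla_psi} then yield $\Embb[\|g_i\|^2] \leq (G_2/\mu)^2\, G_1^2 = (G_1G_2/\mu)^2$ for every index $i$.

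Finally I would combine these via the law of total expectation. Conditioning on $B$ (whose selection, by Assumption~\ref{assumption: finite-sum: batch}, is independent of the randomness inside each $g_i$) and taking expectations term by term, the $1/|B|$ weight cancels the $|B|$ summands, giving $\Embb[\|\gtilde(x,\mu)\|^2 \mid B] \leq (G_1G_2/\mu)^2$ for every realization of $B$; an outer expectation over $B$ preserves this bound and establishes~\eqref{prop: finite-sum: bound}. The main obstacle here is not computational but organizational: one must keep the two layers of randomness---batch selection versus the independent draws defining each $g_i$---cleanly separated so that both the factorization and the conditioning step are valid, and it is precisely the convexity bound that prevents the correlations between distinct batch members from derailing the estimate.
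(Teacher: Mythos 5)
Your proof is correct and follows essentially the same route as the paper's: bound the squared norm of the batch average by the average of the squared norms (Jensen, equivalently the Cauchy--Schwarz step $\|\sum_{i\in B} g_i\|^2 \le |B|\sum_{i\in B}\|g_i\|^2$ that the paper uses), then bound each $\Embb[\|g_i\|^2]$ by $(G_1G_2/\mu)^2$ via the independence factorization and Assumptions~\ref{assumption: finite-sum: nabla_psi} and~\ref{assumption: finite-sum: tilde_phi'}. Your explicit conditioning on the batch $B$ before taking the outer expectation is a bit more careful than the paper's one-line version, but the argument is the same.
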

\begin{proof}
    The stochastic gradient estimate is given by
    \[
        \gtilde(x,\mu) = \frac{1}{|B|} \sum_{i \in B} g_i, \quad \text{where} \quad g_i = \widetilde{\phi}'(\psi(x,\xi_i),\mu) \nabla_x \psi(x,\xi_i),
    \]  
    from which we have
    \begin{equation} \label{eq: finite-sum: gtilde}
               \mathbb{E}\left[\|\gtilde(x,\mu)\|^2\right] = \mathbb{E} \left[ \left\|\frac{1}{|B|} \sum_{i \in B} g_i \right\|^2 \right] \leq \frac{1}{|B|^2} |B| \sum_{i \in B} \mathbb{E} \left[ \|g_i\|^2 \right]. 
    \end{equation}
    From Assumption~\ref{assumption: finite-sum: nabla_psi} and~\ref{assumption: finite-sum: tilde_phi'}, each $ g_i $ satisfies
    \[
        \mathbb{E} \left[ \|g_i\|^2 \right] \leq \mathbb{E} \left[\left(\widetilde{\phi}'(\psi(x,\xi_i),\mu)\right)^2 \|\nabla_x \psi(x,\xi_i)\|^2\right] \leq \left(\frac{G_1 G_2}{\mu}\right)^2.
    \]
    Substituting the above bound into \eqref{eq: finite-sum: gtilde}, we conclude \eqref{prop: finite-sum: bound}.
\end{proof}

\subsection{About smoothing}
Lastly, to fit the convergence theory that we discussed in the previous section into this case, we need to consider Assumption~\ref{assumption: compositional: diff} on the bound of the difference between each function $\phi(\cdot)$ and each smoothing function $\Tilde{\phi}(\cdot,\mu)$. The final result concerns the approximation accuracy of the smoothing function.
\begin{proposition}
       Under Assumption \ref{assumption: compositional: diff}, we have
    \begin{equation*}
    \begin{aligned}
        |f(x)-\ftilde(x,\mu)| \leq C\mu.
    \end{aligned}
    \end{equation*}
\end{proposition}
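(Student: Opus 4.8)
The plan is to exploit the linearity of the finite-sum definitions of $f$ and $\ftilde$ together with the triangle inequality, reducing the desired bound to a per-component application of Assumption~\ref{assumption: compositional: diff}. Recall that throughout this section $f(x)=\frac{1}{N}\sum_{i=1}^N\phi(\psi(x,\xi_i))$ and $\ftilde(x,\mu)=\frac{1}{N}\sum_{i=1}^N\tilde{\phi}(\psi(x,\xi_i),\mu)$, so the two objectives share the same averaging structure and their difference decomposes cleanly across the $N$ components.

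First I would write the difference as a single average of componentwise differences,
\[
f(x)-\ftilde(x,\mu)=\frac{1}{N}\sum_{i=1}^N\big(\phi(\psi(x,\xi_i))-\tilde{\phi}(\psi(x,\xi_i),\mu)\big),
\]
which is immediate from the two definitions. Next I would pass to absolute values and invoke the triangle inequality to obtain $|f(x)-\ftilde(x,\mu)|\le\frac{1}{N}\sum_{i=1}^N|\phi(\psi(x,\xi_i))-\tilde{\phi}(\psi(x,\xi_i),\mu)|$.

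The final step is to apply Assumption~\ref{assumption: compositional: diff} to each summand. Since that assumption holds for every $y\in\Rmbb$, it applies in particular to $y=\psi(x,\xi_i)$ for each index $i$, giving $|\phi(\psi(x,\xi_i))-\tilde{\phi}(\psi(x,\xi_i),\mu)|\le C\mu$. Summing these $N$ identical bounds and dividing by $N$ cancels the factor $N$, yielding $|f(x)-\ftilde(x,\mu)|\le C\mu$, exactly as claimed.

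There is essentially no hard part here: the only thing worth verifying is that Assumption~\ref{assumption: compositional: diff} is stated pointwise over \emph{all} real arguments, so that it legitimately applies to each realized inner value $\psi(x,\xi_i)$ regardless of $x$ and $\xi_i$, and that the uniform constant $C$ is shared across components, which is precisely what allows the average to inherit the same bound. This argument mirrors Proposition~\ref{proposition: compositional: diff} from the single-composition case, the only difference being the extra averaging over the $N$ components.
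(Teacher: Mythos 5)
Your proof is correct and follows exactly the same route as the paper: decompose $f-\ftilde$ into the average of the $N$ componentwise differences, apply the triangle inequality, and bound each term by $C\mu$ via Assumption~\ref{assumption: compositional: diff} evaluated at $y=\psi(x,\xi_i)$. The additional remark about the assumption holding uniformly over all $y\in\Rmbb$ with a shared constant $C$ is a sensible sanity check but does not change the argument.
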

\begin{proof}
    Using Assumption \ref{assumption: compositional: diff}, it is easy to show that
    \begin{equation*}
    \begin{aligned}
        |f(x)-\ftilde(x,\mu)| &= \left|\frac{1}{N}\sum_{i=1}^N \left(\phi(\psi(x,\xi_i))-\Tilde{\phi}(\psi(x,\xi_i),\mu)\right)\right| \\
        &\leq \frac{1}{N} \sum_{i=1}^N \left|  \phi(\psi(x,\xi_i))-\Tilde{\phi}(\psi(x,\xi_i),\mu)  \right| \\
        &\leq C\mu.        
    \end{aligned}
    \end{equation*}
\end{proof}

Using the assumptions and propositions outlined in this section, we can fit the finite-sum compositional case into our convergence results presented in Section~\ref{subsection: convex}.


\section{Numerical results} \label{section: numerical}
In this section, we are going to report some preliminary numerical results on the performance of the SSG method for the finite-sum compositional case using Algorithm~\ref{algorithm: SSG for finite sum compositional}. 

\subsection{Results on hinge loss}
We will use a hinge loss as the outer function and a linear model as the inner function (see Section~\ref{subsec: aboutconvexity} for the detailed problem formulation). The SSG is run using the smoothing version of hinge given in~\eqref{function: Hinge}, and it is compared against a standard smooth SGD method ignoring the non-differentiability of the hinge-loss at the non-smooth point.

We also compare our SSG method with the stochastic compositional
gradient descent (SCGD) method~\cite{wang2017stochastic}. We note again that the
two methods are originally developed under different assumptions: our setting
allows a non-smooth outer function, such as the hinge loss, whereas standard SCGD
is typically analyzed for smooth outer functions. For numerical comparison,
however, we implement SCGD directly on the original non-smooth objective by using
the gradient of the outer function at its non-differentiable point, in the same
spirit as the standard SGD method used in our comparison, where the
non-differentiability at the kink point is ignored in the implementation. By doing so, we treat SCGD as a compositional subgradient baseline for comparison.

We test two classification datasets: QSAR Biodegradation and Breast Cancer Wisconsin.
The QSAR Biodegradation dataset~\cite{qsar_biodegradation_254} is a binary classification dataset from computational
chemistry, where the task is to predict whether a chemical compound is ready biodegradable
from molecular descriptor features. The Breast Cancer Wisconsin dataset~\cite{breast_cancer_wisconsin_(diagnostic)_17} is widely used for benchmarking binary classification methods in medical diagnostics, where the target label indicates whether a tumor is malignant or benign. For both datasets, we map the labels to $\{-1,+1\}$ for hinge-loss training.

Each dataset is randomly split into a 75\% train set and a 25\% test set. The features are standardized using the training set statistics. For all methods, we use mini-batches of size 64 and decaying stepsizes of the form
$\alpha_k=\alpha_0 k^{-3/4}$. For the SSG method, we additionally use the decaying smoothing
parameter $\mu_k=\mu_0 k^{-1/4}$, which is the choice for the SSG method to achieve the best
convergence rate in the convex case as discussed in Theorem~\ref{theorem: convex}. The
parameter values used in the experiments are reported in Table~\ref{tab:real-data-settings}.

\begin{table}[ht]
\centering
\begin{tabular}{lcccc}
\hline
Dataset & Epochs & Batch size & $\alpha_0$ & $\mu_0$ \\
\hline
QSAR Biodegradation & 800 & 64 & 1.2 & 4 \\
Breast Cancer Wisconsin & 150 & 64 & 1.0 & 2 \\
\hline
\end{tabular}
\caption{Experiment settings for the two datasets.}
\label{tab:real-data-settings}
\end{table}

\begin{figure}[ht]
    \centering
    \begin{minipage}{0.48\textwidth}
        \centering
        \includegraphics[width=\linewidth]{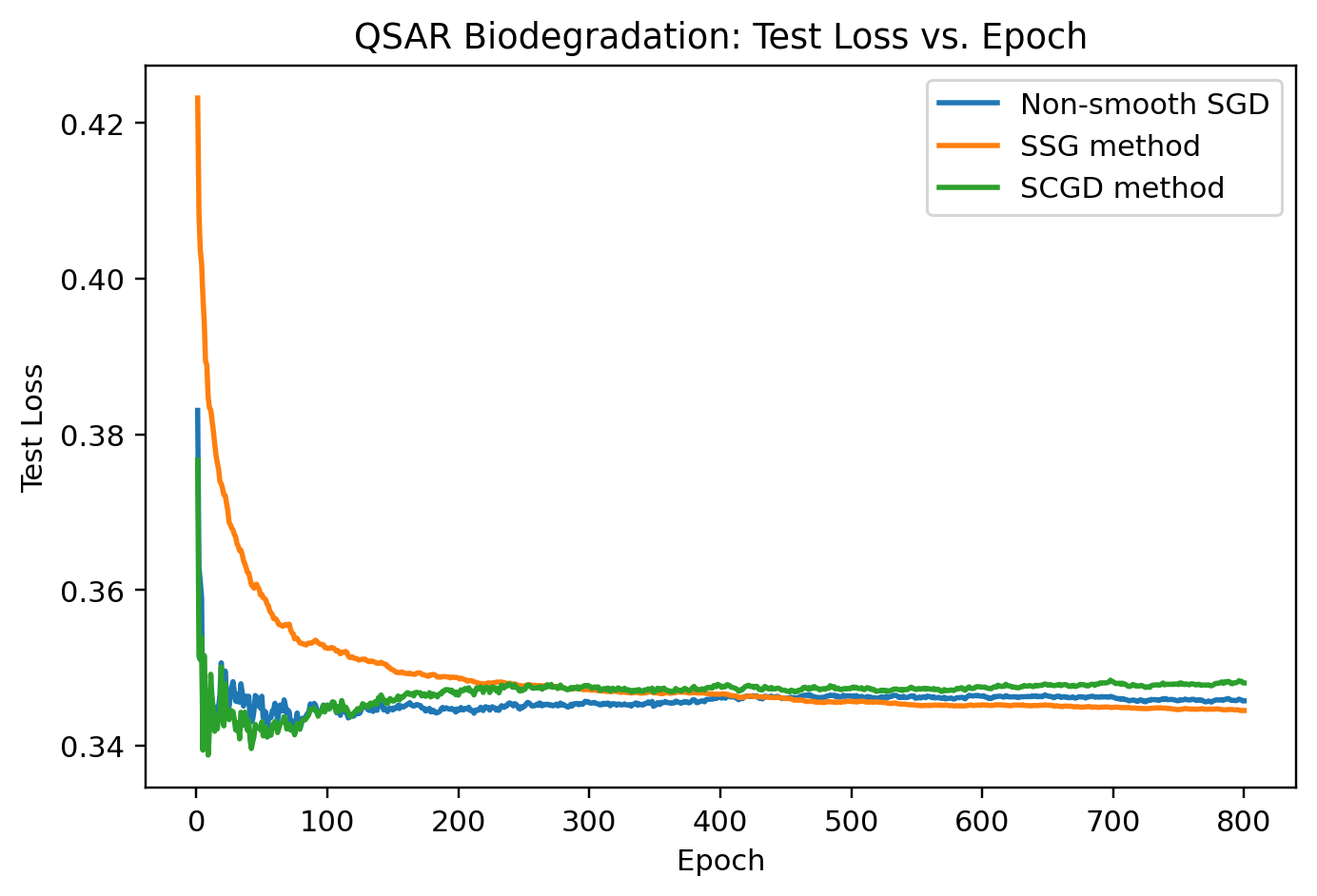}\\
        {\small (a) QSAR Biodegradation}
    \end{minipage}
    \hfill
    \begin{minipage}{0.48\textwidth}
        \centering
        \includegraphics[width=\linewidth]{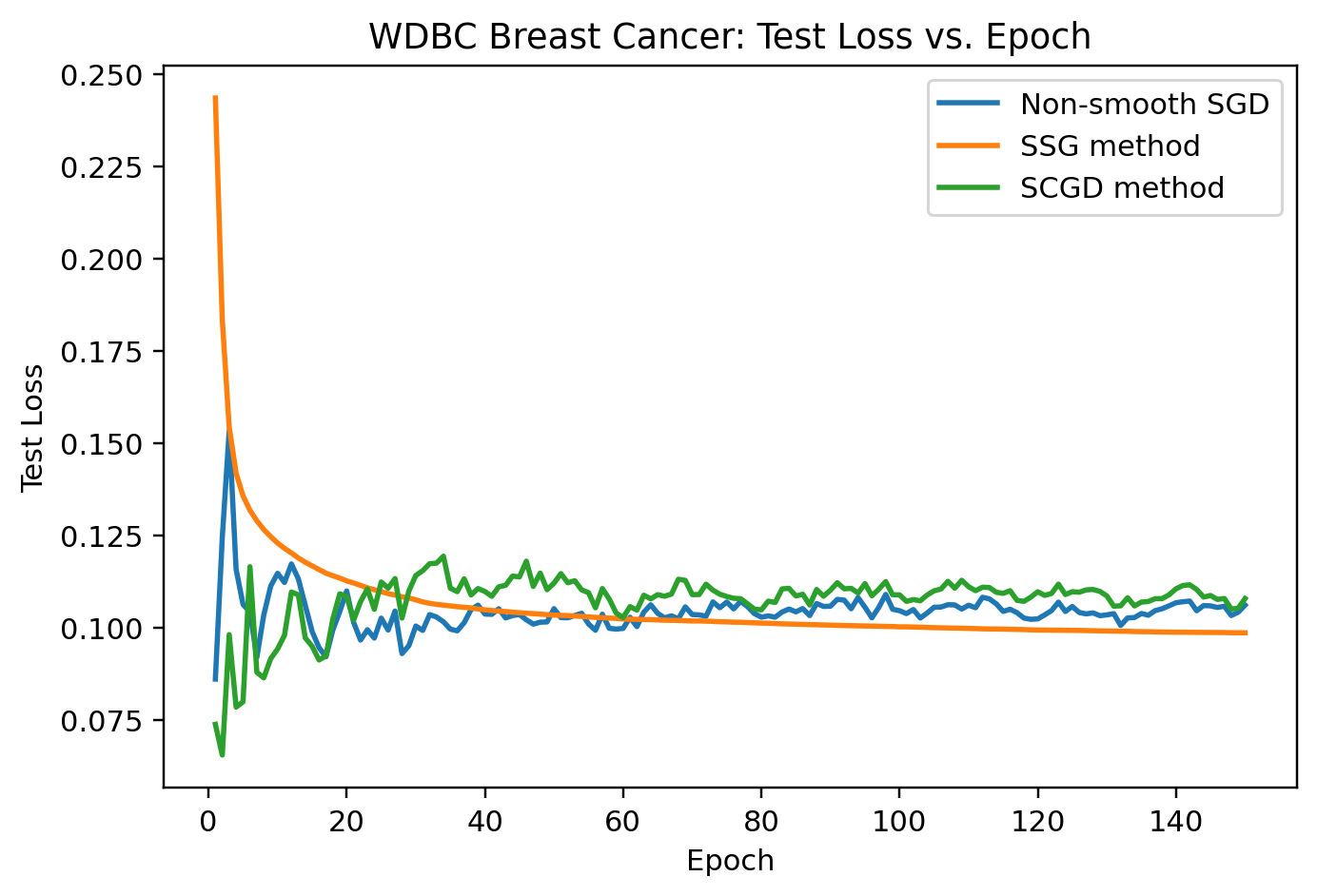}\\
        {\small (b) Breast Cancer Wisconsin}
    \end{minipage}
    \caption{Test loss curves for non-smooth SGD, SSG, and SCGD on the two classification datasets.}
    \label{fig:test-loss-real-data}
\end{figure}

Figure~\ref{fig:test-loss-real-data} plots the test-loss curves for the three methods. On both datasets, SSG achieves the lowest final test hinge loss. For the QSAR Biodegradation dataset, the SSG curve decreases more gradually at the beginning than those of the two non-smooth subgradient methods, but it eventually attains the lowest test loss. For the Breast Cancer Wisconsin dataset, SSG produces a much smoother test-loss curve and again obtains the lowest final test loss. In contrast, the non-smooth SGD and SCGD curves exhibit more visible fluctuations, which is consistent with the fact that both methods use the discontinuous subgradient of the hinge loss.

 We note that the main reason for the smoother behavior of SSG is the smoothing of the hinge loss near the
non-smooth point. For the standard non-smooth SGD and SCGD methods, samples close to the
margin can suddenly enter or leave the active set of the hinge loss, which can produce erratic
jumps in the stochastic subgradient. In contrast, the smoothing function replaces this sharp
transition by a differentiable one. As a result, samples near the margin are weighted more
gradually, leading to more stable stochastic gradients and smoother test-loss trajectories.
Moreover, since the smoothing parameter $\mu_k$ is decreased during the iterations, the
smoothed model progressively approaches the original non-smooth hinge-loss model, allowing a satisfying final test losses. 

\subsection{Results on $L_1$ loss}
We next consider the $L_1$ loss as the outer function and a linear regression model as the
inner function. The SSG method is run using the smoothing version of the absolute value
function given in~\eqref{function: |.|}. As in
the hinge-loss experiment in the previous section, both the
standard SGD and SCGD baselines are applied to the original non-smooth $L_1$ model, while
only SSG uses the smoothed approximation.

We also test two real-world regression datasets: Automobile Price and Yacht Hydrodynamics.
The Automobile Price dataset~\cite{automobile_10} contains technical and insurance-related attributes of automobiles,
and the task is to predict the price of a car. The Yacht Hydrodynamics dataset~\cite{yacht_hydrodynamics_243} contains design
parameters of sailing yachts, and the task is to predict the residuary resistance. Again, for both
datasets, the features are standardized using the training set statistics. Each dataset is randomly split into a 75\% train set and a 25\% test set. For all methods,
we use mini-batches of size 128 and decaying stepsizes of the form
$\alpha_k=\alpha_0 k^{-3/4}$ with $\alpha_0=1$. For the SSG method, we additionally use the
decaying smoothing parameter $\mu_k=\mu_0 k^{-1/4}$ with $\mu_0=5$. We run $T=800$ epochs
for both datasets.

\begin{figure}[ht]
    \centering
    \begin{minipage}{0.48\textwidth}
        \centering
        \includegraphics[width=\linewidth]{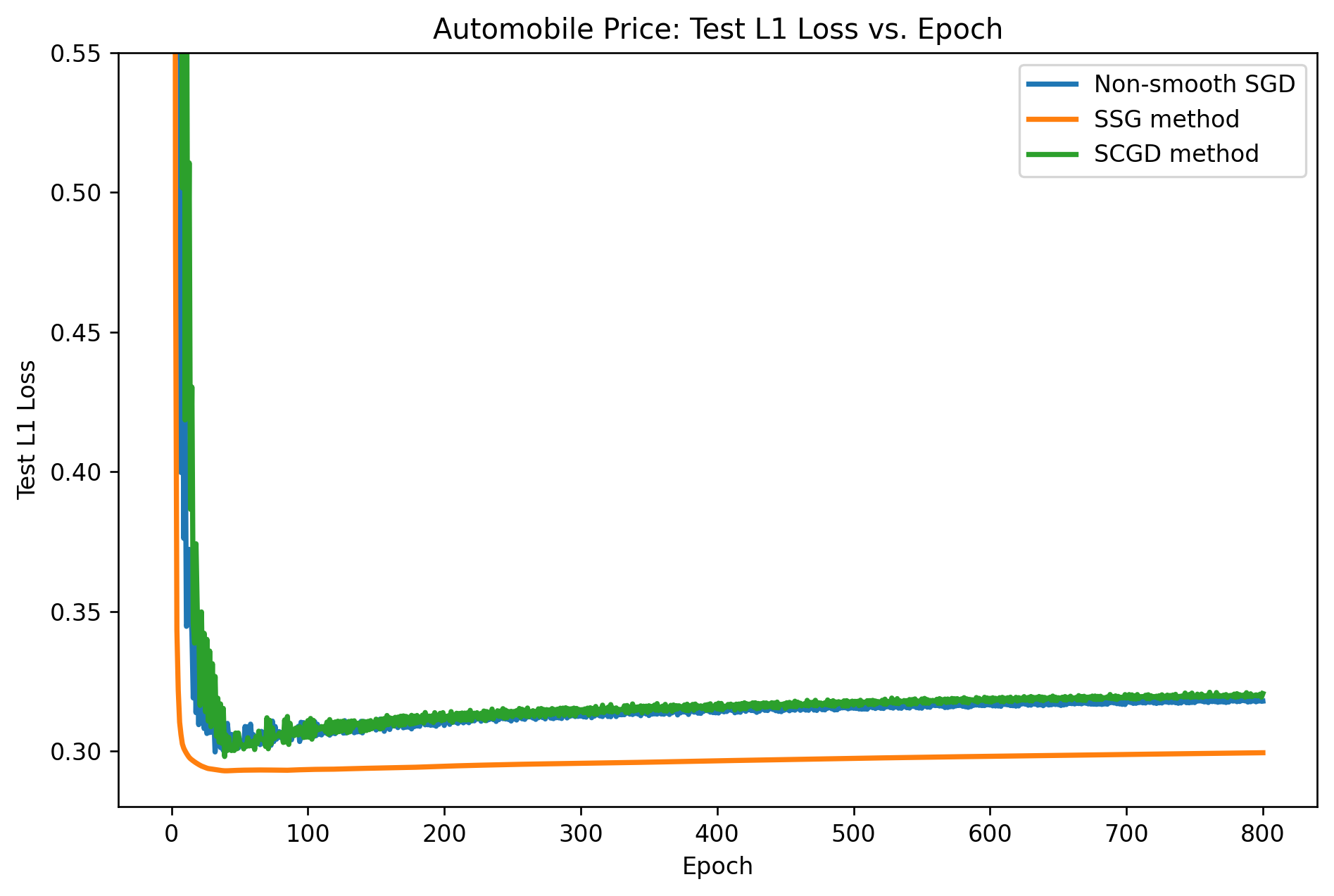}\\
        {\small (a) Automobile Price}
    \end{minipage}
    \hfill
    \begin{minipage}{0.48\textwidth}
        \centering
        \includegraphics[width=\linewidth]{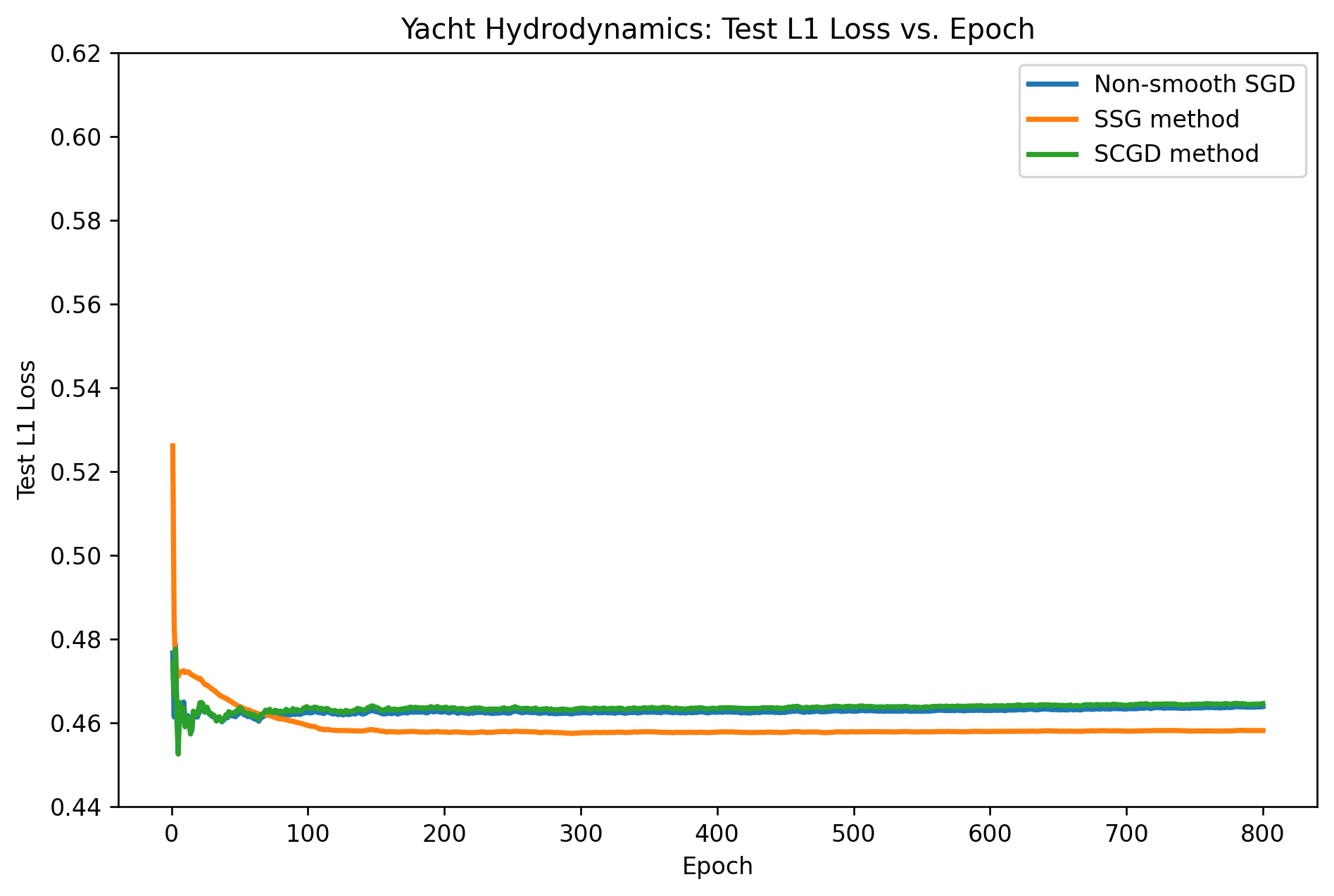}\\
        {\small (b) Yacht Hydrodynamics}
    \end{minipage}
    \caption{Test $L_1$ loss curves for non-smooth SGD, SSG, and SCGD on the two regression datasets.}
    \label{fig:l1-test-loss-real-data}
\end{figure}

Figure~\ref{fig:l1-test-loss-real-data} plots the test-loss curves for the three methods. On both datasets, SSG achieves the lowest final test $L_1$ loss. In both cases, the SSG curve is also smoother than those of non-smooth SGD and SCGD.

We observe here a behavior similar to the one in the hinge-loss case. For the standard non-smooth SGD and SCGD methods, the stochastic subgradient of $|t|$ changes abruptly when the residual crosses zero. Hence, samples with small residuals can switch their subgradient sign from one mini-batch update to the next, producing visible fluctuations in the test-loss curve. In contrast, the smoothing function replaces the kink of the $L_1$ loss with a differentiable transition region. Residuals close to zero are therefore weighted more gradually, leading to more stable stochastic gradients. Moreover, since the smoothing parameter~$\mu_k$ decreases during the iterations, the smoothed model progressively approaches the original non-smooth $L_1$ loss while still benefiting from the stabilizing effect of smoothing.

\section{Conclusions and future works}
This paper presented a smoothing-based stochastic gradient method addressing compositional optimization problems with a non-smooth outer function and a smooth inner function. The SSG method smooths the non-smooth component and progressively decreases the smoothing parameter. 
We highlight the role of the smoothing parameter, noting that iterative decreases in this parameter allow the smoothing function to more closely approximate the original non-smooth function at each iteration. Additionally, this controlled smoothing scheme prevents erratic jumps in gradient updates near non-smooth points, reduces variance in gradient estimates, and provides more consistent descent directions. Consequently, the SSG method can converge fast without significantly compromising accuracy in approximating the non-smooth problem.
We established convergence guarantees for SSG under convex, non-convex, and strongly convex settings. In the convex case, SSG matches the best-known convergence rate~$\Ocal(1/T^{1/4})$ for comparable compositional problems in which the inner function may be non-smooth. 
In the strongly convex case, the rate of SSG is arbitrarily close to~$\Ocal(1/T^{1/2})$. 
To achieve optimal convergence rates, the smoothing parameter decreases more rapidly in the strongly convex setting with $\mu_k = k^{-1/2}$, than in the convex setting with $\mu_k = k^{-1/4}$. This illustrates how stronger curvature conditions permit more aggressive reductions in the smoothing parameter, enabling faster smooth approximations. Similarly, the stepsize $\alpha_k$ must also decay more quickly ($\alpha_k = k^{-3/4}$ in the convex case and approximately $\alpha_k = k^{-3/2}$ in the strongly convex case), revealing a dependency in SSG methods, where a faster decrease in the smoothing parameter necessarily demands a correspondingly faster decay in the stepsize to preserve stability and achieve optimal convergence.

We further demonstrated how the general SSG algorithm can be specialized to two particular compositional settings of interest by satisfying the required assumptions. 
These two settings frequently arise in risk-averse optimization, machine learning, and large-scale optimization. We did not consider the double-expectation formulation examined in previous works, as the applications we target involve only a single expectation.

For future work, a promising direction is the integration of variance-reduction techniques—such as SVRG and SAGA—into this smoothing stochastic 
gradient framework, which may significantly accelerate convergence, especially in large-scale machine learning and optimization settings. Another direction is the exploration of practical applications in machine learning and risk-sensitive optimization, where compositional non-smooth structures naturally arise, such as in CVaR minimization. Evaluating the empirical performance of the SSG method in such domains remains an important task for further study.


\section*{Acknowledgments}
This work is partially supported by the U.S. Air Force Office of Scientific Research (AFOSR) award FA9550-23-1-0217 and the U.S. Office of Naval Research~(ONR) award~N000142412656.


\appendix

\section{Assumption~\ref{assumption: subgradient} for the case~$f(x)=\phi(\psi(x))$}

In this part of the appendix, we want to show strong evidence, for the case~$f(x)=\phi(\psi(x))$, that there exists~$y\in\Bcal(x,E_1\mu)$ for some~$E_1>0$, such that~$\|\nabla_x\ftilde(x,\mu)-g(y)\|\leq E_2\mu$ for some~$E_2>0$, where~$g(y)\in\partial f(y)$. In this case, we have~$\partial f(x)=\partial\phi(x)\nabla\psi(x)$,~$\ftilde(x,\mu)=\tilde{\phi}(\psi(x),\mu)$, and~$\nabla_x \ftilde(x,\mu)=\tilde{\phi}'(x,\mu)\nabla\psi(x)$.

If~$\psi(x)>\frac{\mu}{2}$, then~$\ftilde$ and~$f$ coincide, and the result is trivially satisfied for~$y=x$. If~$\psi(x)\leq\frac{\mu}{2}$, assume the existence of~$y\in\Bcal(x,E_1\mu)$ such that~$\psi(y)=0$. For most~$\phi$, such as in the $L$1 and Hinge cases (see Section~\ref{section: smoothing function}), one has~$|\tilde{\phi}'(t,\mu)|\leq E_3|t|/\mu$ for some~$E_3>0$ and~$\tilde{\phi}'(t,\mu)\in\partial\phi(0)$. Hence,~$g(y)=\tilde{\phi}'(\psi(x),\mu)\nabla\psi(x)\in\partial f(y)$, and we can derive (assuming that~$\nabla\psi$ is~$L_{\nabla\psi}$-Lipschitz continuous for some~$L_{\nabla\psi}>0$)
\begin{equation*}
    \begin{aligned}
        \|\nabla_x\ftilde(x,\mu)-g(y)\| &= |\tilde{\phi}'(\psi(x),\mu)| \|\nabla\psi(x)-\nabla\psi(y)\| \\
        &\leq \frac{E_3|\psi(x)|}{\mu} L_{\nabla\psi} \|x-y\| \\
        &\leq \frac{E_1E_3}{2} L_{\nabla\psi} \equiv E_2\mu.
    \end{aligned}
\end{equation*}

Note that for most~$\psi$, it is reasonable to assume that such a~$y$ exists. If that was not the case, then for all~$E_1>0$ and~$\psi(y)=0$, one would have~$\|y-x\|>E_1\mu$. This would imply that~$\|y-x\|>2E_1\psi(x)$, and one would obtain~$(\psi(x)-\psi(y))/\|x-y\|\leq1/(2E_2)$ for all~$E_2>0$, which would mean that~$\psi$ would be arbitrarily flat.

\section{Rate in the strongly convex case}\label{app:strongly convex}
In this part of the appendix, we analyze the rate of convergence of the SSG algorithm under the assumption that the smoothing function is strongly convex. We begin by stating the strong convexity setting and subsequently present the corresponding convergence theorem. Assumption~\ref{assumption: sc} states that the smoothing function~$\ftilde$ is strongly convex, with the strong convexity constant depending on the smoothing parameter~$\mu$.
\bassumption \label{assumption: sc}
For all $x\in\Rmbb^n$, the smoothing function $\ftilde(x, \mu)$ is strongly convex with a strong convexity constant $c(\mu)$ given by $c(\mu) = c/\mu$,
where~$c > 0$ is a constant.
\eassumption

Assumption~\ref{assumption: sc} is satisfied for the smoothing functions~\eqref{function: |.|} and~\eqref{function: Hinge} when~$x$ is relatively close to the kink point.
After having stated the assumption we need, we now establish the rate of convergence for the strongly convex case.
\btheorem [Convergence rate in the strongly convex case]
Under Assumptions~\ref{assumption: unbiasedness}--\ref{assumption: sc: diff} and~\ref{assumption: sc}, suppose that the SSG algorithm runs with the smoothing parameter~$\mu_k=k^{-a}$, where $a>0$, and stepsize~$\alpha_k=\frac{1}{(k+1)c(\mu_k)}$. Define~$f^T_{\text{best}}=\min_{k=\{0,1,\ldots,T\}}f(x_k)$. Assume that~$x_*$ is a minimizer of the function~$f$ and~$x_1$ is the initial iterate. Then for all~$s>0$ and~$a>\frac{1}{2}$, we have
\begin{equation} \label{eq: sc}
        \Embb[f^T_{\text{best}}]-f(x_*) \;\leq\; \left(\frac{4aC}{2a-1} + \frac{1}{2} \|x_1-x_*\|^2  + \frac{1}{2}G^2\right) \frac{1}{T^{1-a}} + \frac{G^2}{2s}\frac{1}{T^{1-s-a}}.
\end{equation}
\etheorem
\bproof
We first recall the update rule of the SSG algorithm. For each iteration, we set the new iterate as $x_{k+1} = x_k-\alpha_k \nabla_x \ftilde(x_k,\xi_k,\mu_k)$.
We use the update rule to get the following
\begin{equation} \label{eq: using update rule}
\begin{aligned}
     &\Embb[\|x_{k+1}-x_*\|^2|x_k] \\
     &= \Embb[\|x_k-\alpha_k \nabla_x \ftilde(x_k,\xi_k,\mu_k) - x_*\|^2|x_k] \\
     &= \Embb\|x_k-x_* \|^2|x_k] - 2 \alpha_k \Embb[\langle \nabla_x \ftilde(x_k,\xi_k,\mu_k),x_k-x_*\rangle |x_k] +\alpha_k^2\Embb[\|\nabla_x \ftilde(x_k,\xi_k,\mu_k)\|^2|x_k] \\
     &= \|x_k-x_* \|^2 - 2 \alpha_k \langle \nabla_x \ftilde(x_k,\mu_k),x_k-x_*\rangle
     +\alpha_k^2\Embb[\|\nabla_x \ftilde(x_k,\xi_k,\mu_k)\|^2|x_k].
\end{aligned}
\end{equation}
By the strongly convexity assumption of $\ftilde(x,\mu)$, we have
\begin{equation} \label{ieq: strongly convexity}
    \langle \nabla_x \ftilde(x_k,\mu_k),x_k-x_*\rangle \geq \frac{c(\mu_k)}{2}\|x_k-x_*\|^2 + \ftilde(x_k,\mu_k)-\ftilde(x_*,\mu_k).
\end{equation}
Combining \eqref{eq: using update rule} and \eqref{ieq: strongly convexity}, and taking the unconditional expectation $\Embb[\cdot]$ of both sides, the following inequality is obtained:
\begin{equation*} \label{ieq: sc: combine}
    \begin{aligned}
        \Embb[\|x_{k+1}-x_*\|^2] &\leq \Embb[\|x_k-x_* \|^2] - \alpha_k c(\mu_k) \Embb[\|x_k-x_* \|^2] \\ &\quad - 2\alpha_k\Embb[\ftilde(x_k,\mu_k)-\ftilde(x_*,\mu_k)] + \alpha_k^2 \Embb[\|\nabla_x \ftilde(x_k,\xi_k,\mu_k)\|^2].
    \end{aligned}
\end{equation*}

Then, by applying the bounded second moment assumption of the smoothing gradient as stated in Assumption \ref{assumption: bounded variance}, we have
\begin{equation} \label{ieq: sc: boundness assumption}
    \begin{aligned}
     2\alpha_k\Embb[\ftilde(x_k,\mu_k)-\ftilde(x_*,\mu_k)] &\leq (1-\alpha_kc(\mu_k)) \Embb[\|x_k-x_* \|^2] - \Embb[\|x_{k+1}-x_*\|^2]  + \frac{\alpha_k^2 }{\mu_k^2}G^2.  \\
    \end{aligned}
\end{equation}
Recall that we have $\alpha_k=\frac{1}{k^a(k+1)}$, $c(\mu_k)=k^a$, and $\mu_k=k^{-a}$. Consequently, inequality \eqref{ieq: sc: boundness assumption} can be written as:
\begin{equation*} \label{ieq: plugin ak ck}
    \begin{aligned}
     \frac{2}{k^a(k+1)}\Embb[\ftilde(x_k,\mu_k)-\ftilde(x_*,\mu_k)] &\leq \frac{k}{k+1}\Embb[\|x_k-x_* \|^2] - \Embb[\|x_{k+1}-x_*\|^2] + \frac{1}{(k+1)^2} G^2 .
    \end{aligned}
\end{equation*}
Multiplying both sides by $k+1$, it follows that
\begin{equation} \label{ieq: before sum}
    \begin{aligned}
        \frac{2}{k^a}\Embb[\ftilde(x_k,\mu_k)-\ftilde(x_*,\mu_k)] &\leq k\Embb[\|x_k-x_* \|^2] - (k+1)\Embb[\|x_{k+1}-x_*\|^2] + \frac{1}{(k+1)} G^2 .
    \end{aligned}
\end{equation}

Next, summing the inequality \eqref{ieq: before sum} from $k=1$ to $T$ for some $T>0$, we obtain
\begin{equation} \label{ieq: sum}
\begin{aligned}
    &\sum_{k=1}^T\frac{2}{k^a}\Embb[\ftilde(x_k,\mu_k)-\ftilde(x_*,\mu_k)] 
    = \sum_{k=1}^T\frac{2}{k^a}\Embb[f(x_k)-f(x_k)+f(x_*)-f(x_*)+\ftilde(x_k,\mu_k)-\ftilde(x_*,\mu_k)] \\
    &\leq \Embb[\|x_1-x_*\|^2] -  (T+1)\Embb[\|x_{T+1}-x_*\|^2] +G^2\sum_{k=1}^T \frac{1}{(k+1)} \\
    &\leq \|x_1-x_*\|^2 + G^2\sum_{k=1}^T \frac{1}{(k+1)}.
\end{aligned}
\end{equation}
From inequality \eqref{ieq: sum}, together with \eqref{eq: accuracy} (for~$x_n$ and~$x_*$), we have
\begin{equation} \label{ieq: after sum}
    \begin{aligned}
        &2\sum_{k=1}^T\frac{1}{k^a}\Embb[f(x_k)-f(x_*)]\\
        &\leq -\sum_{k=1}^T\frac{2}{k^a}\Embb[\ftilde(x_k,\mu_k)-f(x_k)] + \sum_{k=1}^T\frac{2}{k^a}\Embb[\ftilde(x_*,\mu_k)-f(x_*)]+\|x_1-x_*\|^2 + G^2\sum_{k=1}^T \frac{1}{(k+1)} \\
        &\leq 4C\sum_{k=1}^T\frac{1}{k^{2a}} + \|x_1-x_*\|^2 + G^2\sum_{k=1}^T \frac{1}{(k+1)} .
    \end{aligned}
\end{equation}

Now, our goal is to demonstrate that the right-hand side of inequality \eqref{ieq: after sum} can be bounded by a constant. To achieve this,  let us first consider the term $\sum_{k=1}^T\frac{1}{k^{2a}}$, it holds that 
\begin{equation*}
    \begin{aligned}
        \sum_{k=1}^T\frac{1}{k^{2a}} \leq 1 + \int_{1}^T \frac{1}{x^{2a}} dx = 1 + \frac{x^{1-2a}}{1-2a}\Bigg|^T_1 = \frac{T^{1-2a}-2a}{1-2a}.
    \end{aligned}
\end{equation*}
Particularly, when $a > \frac{1}{2}$, we have 
\begin{equation} \label{ieq: rt a>1/2}
    \sum_{k=1}^T\frac{1}{k^{2a}}\leq \frac{2a-T^{1-2a}}{2a-1} \leq \frac{2a}{2a-1}.
\end{equation}
Also, consider $\sum_{k=1}^T \frac{1}{(k+1)}$, it holds that
\begin{equation*} \label{ieq: st}
    \begin{aligned}
        \sum_{k=1}^T \frac{1}{(k+1)} \leq \ln T + 1.
    \end{aligned}
\end{equation*}
Notice that for all $T\geq0$ and $s>0$, the inequality $\ln T\leq \frac{T^s}{s}$ holds. 
Therefore, we have 
\begin{equation} \label{ieq: st without ln}
    \sum_{k=1}^T \frac{1}{(k+1)} \leq \ln T + 1 \leq \frac{T^s}{s} + 1.
\end{equation}

Next, by defining $f^T_{\text{best}}=\min_{k=\{0,1,\ldots,T\}}f(x_k)$ and considering again inequality \eqref{ieq: after sum}, we have
\begin{equation} \label{ieq: lhs}
    2\sum_{k=1}^T\frac{1}{k^a}\Embb[f(x_k)-f(x_*)] \geq 2\Embb[f^T_{\text{best}}-f(x_*)] \sum_{k=1}^T\frac{1}{k^a} \geq 2\Embb[f^T_{\text{best}}-f(x_*)] \frac{1}{T^{a-1}}.
\end{equation}
Thus, when $a>\frac{1}{2}$ and $s>0$, combining \eqref{ieq: after sum}, \eqref{ieq: rt a>1/2}, \eqref{ieq: st without ln}, and \eqref{ieq: lhs} gives the following:
\begin{equation*} \label{ieq: sc: conclusion}
    \begin{aligned}
        \Embb[f^T_{\text{best}}-f(x_*)] &\leq \left(\frac{4aC}{2a-1} + \frac{1}{2} \|x_1-x_*\|^2 + G^2\frac{T^s + s}{2s}\right) \frac{1}{T^{1-a}},
    \end{aligned}
\end{equation*}
from which we arrive at \eqref{eq: sc}.
\eproof

The rate in the strongly convex case is arbitrarily close to~$1/T^{1/2}$ in the sense of being worse than~$1/T^{1/2}$ but better than~$1/T^{1/p}$ for any~$p>2$. This rate can be translated into a worst-case complexity bound arbitrarily close to~$\varepsilon^{-2}$, in the sense of being slower than~$\varepsilon^{-2}$ but faster than any bound of rate $\varepsilon^{-p}$ with~$p>2$. Note that the rate in~\cite{wang2017stochastic} is~$1/T^{2/3}$, but in their approach, the authors used a similar but different problem formulation and also a form of momentum.

\section{Strongly convex case rate for~$f(x)=\phi(\Embb[\psi(x,\xi)])$}
In this part of the appendix, we consider the case~$f(x)=\phi(\Embb[\psi(x,\xi)])$ (which follows the same structure as in Section~\ref{section: compositional}) when~$f$ is strongly convex. In this setting, we aim to improve the convergence rate established for the general case of~$f$ in Appendix~\ref{app:strongly convex}. The specific algorithm we will use is shown in Algorithm~\ref{algorithm: SSG for sc}, which follows the lines of the SCGD algorithm in~\cite{wang2017stochastic}. The main modification in Algorithm~\ref{algorithm: SSG for sc} compared to Algorithm~\ref{algorithm: SSG for general compositional} in Section~\ref{section: compositional} lies in Step 2, which updates a moving average of the inner function~$\psi$ and computes the outer derivative at that averaged value. By doing so, the modified algorithm reduces the variance of~$\psi(x_k,\xi_k)$ and provides more consistent outer derivative estimates. Besides, also notice that here we will be requiring~$\{\alpha_k\}$ decaying faster than~$\{\beta_k\}$, meaning~$\alpha_k/\beta_k\to0$. 

\begin{algorithm}[H]
    \caption{SSG method for strongly convex~$f(x)=\phi(\Embb[\psi(x,\xi)])$ } \label{algorithm: SSG for sc}
\begin{algorithmic}[1]
\medskip
\item[] \textbf{Input:} $x_1$,~$y_1$,~$\{\alpha_k\}$,~$\{\beta_k\}$, and~$\{\mu_k\}$.
\medskip
\item[] \textbf{For} $k = 1,2,\dots$ \textbf{do}
\begin{enumerate}
    \item[]  {\bf Step 1.}
    Generate realizations~$\xi_k^1$ and~$\xi_k^2$ of the random variable $\xi$. Then compute $\psi(x_k,\xi_k^1)$ and~$\nabla_x \psi(x_k,\xi_k^2)$, respectively.
    \item[]  {\bf Step 2.}
    Update~$y_{k+1}=(1-\beta_k)y_k+\beta_k \psi(x_k,\xi_k^1)$ and compute~$\tilde{\phi}'(y_{k+1},\mu_k)$.
    \item[]  {\bf Step 3.} 
    Update iterate~$x_{k+1} = x_k -\alpha_k\tilde{\phi}'(y_{k+1},\mu_k)\nabla_x \psi(x_k,\xi_k^2)$.
    \item[]  {\bf Step 4.} 
    Update smoothing parameter~$\mu_{k+1} \leq \mu_k$.
\end{enumerate}
\item[] \textbf{End do}
\end{algorithmic}
\end{algorithm}

Since the problem in this appendix has the same structure as in Section~\ref{section: compositional}, we retain Assumptions~\ref{assumption: compositional: nabla_psi} and~\ref{proposition: compositional: tilde_phi}. Following~\cite{wang2017stochastic}, we also introduce the additional assumptions stated below to establish the convergence result. Assumption~\ref{Assumption: f app sc} states that the function~$f$ is strongly convex. 

\bassumption \label{Assumption: f app sc}
For all~$x\in\Rmbb^n$, the true function~$f$ is strongly convex with a strong convexity constant~$2\sigma$, where~$\sigma$ is some positive scalar.
\eassumption

In the proof of Theorem~\ref{Theorem: scimprove} below, we will use the following inequality known for strongly convex functions with minimizer~$x_*$
\begin{equation} \label{ieq: sc x*}
    f(x)-f(x_*) \;\geq\; \sigma\|x-x_*\|^2.
\end{equation}
Assumption~\ref{Assumption: phi'lip} states the Lipschitz continuity of the derivative of the smoothing outer function~$\tilde{\phi}$, where the Lipschitz constant is inversely proportional to the smoothing parameter~$\mu$.

\bassumption \label{Assumption: phi'lip}
The smoothing outer function $\tilde{\phi}$ has a Lipschitz continuous derivative with constant $L_{\tilde{\phi}}/\mu$, meaning that for all~$y,\bar{y}\in\mathbb{R}$ and all~$\mu>0$,
\begin{equation*}
    \|\tilde{\phi}'(y,\mu)-\tilde{\phi}'(\ybar,\mu)\|\;\leq\; \frac{1}{\mu}L_{\tilde{\phi}} \|y-\ybar\|,
\end{equation*}
where~$L_{\tilde{\phi}}$ is some positive scalar.
\eassumption





Assumption~\ref{Assumption:psi_unbiased_var_lipschitz} requires that the inner function~$\psi$ is Lipschitz continuous, and its stochastic sample~$\psi(\cdot,\xi)$ is an unbiased estimator with a bounded variance.

\bassumption\label{Assumption:psi_unbiased_var_lipschitz}
For all $x\in\Rmbb^{n}$, the inner function~$\psi(x)$ is Lipschitz continuous with Lipschitz constant~$G_1$ (and here we are using the same scalar as in Assumption~\ref{assumption: compositional: nabla_psi}), and its stochastic estimator~$\psi(x,\xi)$ satisfies  
\begin{enumerate}[(i)]
    \item Unbiasedness:  $\Embb[\psi(x,\xi)]=\psi(x)$.
    \item Bounded variance:  $\Embb\!\bigl[\|\psi(x,\xi)-\psi(x)\|^{2}\bigr]\le V_\psi$, where $V_\psi>0$ is some positive scalar.
\end{enumerate}
\eassumption

Assumption~\ref{Assumption: partial} states that the term~$\Embb[\tilde{\phi}'(\psi(x),\mu)\nabla_x\psi(x,\xi)]$ always lies in the subdifferential of the function~$f$.

\bassumption \label{Assumption: partial}
For all~$x\in\Rmbb^n$ and for all~$\mu>0$, it holds that 
$$\Embb[\tilde{\phi}'(\psi(x),\mu)\nabla_x \psi(x,\xi)]\;\in\;\partial f(x).$$
\eassumption

Before showing the convergence analysis of Algorithm~\ref{algorithm: SSG for sc}, we first present a lemma~(\cite[Lemma 2(a)]{wang2017stochastic}) bounding the difference between~$y_{k+1}$ and~$\psi(x_k)$ at each iteration~$k$. 

\blemma \label{lemma app c}
Let Assumptions~\ref{assumption: compositional: nabla_psi} and~\ref{Assumption:psi_unbiased_var_lipschitz} hold. Consider the sequences~$\{(x_k,y_k)\}$ generated by Algorithm~\ref{algorithm: SSG for sc}, it holds that
\begin{equation} \label{eq: app lemma}
\Embb[\|y_{k+1}-\psi(x_k)\|^2] \;\leq\; (1-\beta_k)\Embb[\|y_k - \psi(x_{k-1})\|^2] + \frac{G_1^2}{\beta_k} \Embb[\|x_k-x_{k-1}\|^2] + 2 V_\psi \beta_k^2.
\end{equation}
\elemma
\bproof
See~\cite[Supplementary Materials Section~G.1]{wang2017stochastic}.
\eproof

With the necessary assumptions and lemma listed, we now present the following convergence results. The proof of the theorem is inspired by~\cite{wang2017stochastic}.

\btheorem \label{Theorem: scimprove}
Under Assumptions~\ref{assumption: compositional: nabla_psi},~\ref{proposition: compositional: tilde_phi}, and \ref{Assumption: f app sc}--\ref{Assumption: partial}, suppose that Algorithm~\ref{algorithm: SSG for sc} runs with stepsizes~$\alpha_k=\frac{1}{k\sigma}$,~$\beta_k=\frac{1}{k^{2/3}}$, and smoothing parameter~$\mu_k=k^{-a}$, where $a>0$. Assume that~$x_*$ is the minimizer of the function~$f$. For all~$a\in(0,\frac{1}{6})$ and for sufficiently large~$T$, we have
\begin{equation*} \label{eq: sc imp}
\begin{aligned}
    \Embb[\|x_{T+1}-x_*\|^2] &\;\leq\; \Ocal\left(\frac{G_1^2G^2_2}{\sigma^2}\frac{1}{T^{1-2a}} + \frac{L^2_{\tilde{\phi}}G_1^6G_2^2}{\sigma^4}\frac{1}{T^{2/3-4a}}  + \frac{L^2_{\tilde{\phi}}G_1^2V_\psi}{\sigma^2}\frac{1}{T^{2/3-2a}} \right)    \\
    &\;=\;\Ocal\left(\frac{1}{T^{2/3-4a}}\right).
\end{aligned}
\end{equation*}
\etheorem
\begin{proof}
First, from the update rule of Algorithm~\ref{algorithm: SSG for sc}, we have
    \begin{equation} \label{ieq: appendix: xk-x}
    \begin{aligned}
        \|x_{k+1}-x_*\|^2 &= \|x_k-\alpha_k\tilde{\phi}'(y_{k+1},\mu_k)\nabla_x \psi(x_k,\xi_k^2)-x_*\|^2 \\
        &= \|x_k-x_*\|^2 - 2\alpha_k(x_k-x_*)^\T \tilde{\phi}'(y_{k+1},\mu_k)\nabla_x \psi(x_k,\xi_k^2) \\
        &\quad\;+ \alpha_k^2 \|\tilde{\phi}'(y_{k+1},\mu_k)\nabla_x \psi(x_k,\xi_k^2)\|^2 \\
        &= \|x_k-x_*\|^2 - 2\alpha_k(x_k-x_*)^\T\tilde{\phi}'(\psi(x_k),\mu_k)\nabla_x \psi(x_k,\xi_k^2) + u_k \\
        &\quad\;+ \alpha_k^2 \|\tilde{\phi}'(y_{k+1},\mu_k)\nabla_x \psi(x_k,\xi_k^2)\|^2,
    \end{aligned}
    \end{equation}
where~$u_k$ is defined as 
\begin{equation*}
    u_k = 2\alpha_k(x_k-x_*)^\T\nabla_x\psi(x_k,\xi_k^2)(\tilde{\phi}'(\psi(x_k),\mu_k)-\tilde{\phi}'(y_{k+1},\mu_k)).
\end{equation*}
To bound~$u_k$, we use Assumption~\ref{Assumption: phi'lip} and Young's inequality for products
\begin{align} \label{ieq: uk}
u_k &\leq 2\alpha_k \|x_k - x_*\| \|\nabla_x \psi(x_k, \xi_k^2)\| \|\tilde{\phi}'(\psi(x_k), \mu_k) - \tilde{\phi}'(y_{k+1}, \mu_k)\| \notag \\
&\leq \frac{2\alpha_k L_{\tilde{\phi}}}{\mu_k} \|x_k - x_*\| \|\nabla_x \psi(x_k, \xi_k^2)\| \|\psi(x_k) - y_{k+1}\| \notag \\
&\leq \alpha_k \sigma \|x_k - x_*\|^2 + \frac{\alpha_k L_{\tilde{\phi}}^2}{\sigma \mu_k^2} \|\nabla_x \psi(x_k, \xi_k^2)\|^2 \|\psi(x_k) - y_{k+1}\|^2
\end{align}


Taking the conditional expectation with respect to~$x_k$ on both sides of~\eqref{ieq: appendix: xk-x} and applying the convexity of~$f$ together with Assumption~\ref{Assumption: partial}, we obtain
\begin{equation} \label{ieq: x-x|x}
    \begin{aligned}
    \Embb[\|x_{k+1}-x_*\|^2|x_k] &= \|x_k-x_*\|^2 - 2\alpha_k(x_k-x_*)^\T\Embb[\tilde{\phi}'(\psi(x_k),\mu_k)\nabla_x \psi(x_k,\xi_k^2)|x_k]  \\
        &\quad\;+ \alpha_k^2 \Embb[\|\tilde{\phi}'(y_{k+1},\mu_k)\nabla_x \psi(x_k,\xi_k^2)\|^2|x_k]+ \Embb[u_k|x_k]\\
        &\leq \|x_k-x_*\|^2 - 2\alpha_k (f(x_k)-f^*) + \alpha_k^2 \Embb[\|\tilde{\phi}'(y_{k+1},\mu_k)\nabla_x \psi(x_k,\xi_k^2)\|^2|x_k] \\
        &\quad\;+ \Embb[u_k|x_k].
    \end{aligned}
\end{equation}
Next, by taking conditional expectation in~\eqref{ieq: uk}, plugging it into~\eqref{ieq: x-x|x}, and taking total expectation on both sides, we have
\begin{equation*}
    \begin{aligned}
        \Embb[\|x_{k+1}-x_*\|^2] &\leq (1+\sigma\alpha_k)\|x_k-x_*\|^2 - 2\alpha_k (f(x_k)-f^*) + \alpha_k^2 \Embb[\|\tilde{\phi}'(y_{k+1},\mu_k)\nabla_x \psi(x_k,\xi_k^2)\|^2] \\
        &\quad\;+ \frac{\alpha_k L_{\tilde{\phi}}^2}{\sigma\mu_k^2} \Embb[\|\nabla_x\psi(x_k,\xi_k^2)\|^2\|\psi(x_k)-y_{k+1}\|^2].
    \end{aligned}
\end{equation*} 
We now use Assumptions~\ref{assumption: compositional: nabla_psi}--\ref{proposition: compositional: tilde_phi} and~\eqref{ieq: sc x*}, and obtain
\begin{equation} \label{ieq: b2}
    \Embb[\|x_{k+1}-x_*\|^2] \leq (1-\sigma\alpha_k)\|x_k-x_*\|^2 + \alpha_k^2\left(\frac{G_1G_2}{\mu_k}\right)^2 + \frac{\alpha_k L_{\tilde{\phi}}^2G_1^2}{\sigma\mu_k^2} \Embb[\|\psi(x_k)-y_{k+1}\|^2].
\end{equation}

Now we denote~$a_k=\Embb[\|x_k-x_*\|^2]$ and~$b_k=\Embb[\|y_k-\psi(x_{k-1})\|^2]$. Let~$\Lambda_{k+1}=\max\{\frac{L_{\tilde{\phi}}^2G_1^2\alpha_k}{(\beta_k-\sigma\alpha_k)\sigma\mu_k^2}-\frac{L^2_{\tilde{\phi}}G_1^2\alpha_k}{\sigma\mu_k^2},0\}$. Given the choice of~$\alpha_k$ and~$\beta_k$ ($\alpha_k/\beta_k\to0$), one has~$\Lambda_{k+1}=\Theta(\frac{L_{\tilde{\phi}}^2G_1^2\alpha_k}{\beta_k\sigma\mu_k^2})$. Multiplying~\eqref{eq: app lemma} by~$\Lambda_{k+1}+\frac{L_{\tilde{\phi}}^2G_1^2\alpha_k}{\sigma\mu_k^2}$ and summing it with~\eqref{ieq: b2}, we obtain
\begin{equation} \label{ieq: ak+1}
    \begin{aligned}
        a_{k+1} + (\Lambda_{k+1}+\frac{L_{\tilde{\phi}}^2G_1^2\alpha_k}{\sigma\mu_k^2})b_{k+1} &\leq (1-\sigma\alpha_k)a_k+(1-\beta_k)(\Lambda_{k+1} + \frac{L^2_{\tilde{\phi}}G_1^2\alpha_k}{\sigma\mu_k^2})b_k \\
        &\quad\;+ w_k + \frac{L^2_{\tilde{\phi}}G_1^2\alpha_k}{\sigma\mu_k^2}b_{k+1},
    \end{aligned}
\end{equation}  
where
\begin{equation*}
    \begin{aligned}
        w_k &= \alpha_k^2\left(\frac{G_1G_2}{\mu_k}\right)^2 + \left(\Lambda_{k+1}+\frac{L_{\tilde{\phi}}^2G_1^2\alpha_k}{\sigma\mu_k^2} \right)\left(\frac{G_1^2}{\beta_k}\Embb[\|x_k-x_{k-1}\|^2] + 2V_\psi \beta_k^2 \right) \\
        &= \alpha_k^2\left(\frac{G_1G_2}{\mu_k}\right)^2+(\frac{L_{\tilde{\phi}}^2G_1^2}{\sigma\mu_k^2})\Ocal(\frac{\alpha^3_kG_1^4G_2^2}{\beta_k^2\mu_k^2}+V_\psi\alpha_k\beta_k).
    \end{aligned}
\end{equation*}
From~\eqref{ieq: ak+1} and the expression for~$\Lambda_{k+1}$, we can infer that
\begin{equation*}
\begin{aligned}
    a_{k+1}+\Lambda_{k+1}b_{k+1} &\leq (1-\sigma\alpha_k)a_k + (1-\beta_k)(\Lambda_{k+1} + \frac{L^2_{\tilde{\phi}}G_1^2\alpha_k}{\sigma\mu_k^2})b_k + w_k \\
     &\leq (1-\sigma\alpha_k)(a_k+\Lambda_{k+1}b_k) + w_k,   
\end{aligned}
\end{equation*}
where the second inequality comes from the expression for~$\Lambda_{k+1}$ and~$\sigma\alpha_k\leq\beta_k$.
Similarly to~\cite{wang2017stochastic}, it can be shown that~$0<\Lambda_{k+1}\leq\Lambda_k$ for sufficiently large~$k$. Thus, if~$k$ is large enough, we have
\begin{equation} \label{ieq: k suff large}
    a_{k+1} + \Lambda_{k+1}b_{k+1} \leq (1-\sigma\alpha_k)(a_k+\Lambda_kb_k) + w_k.
\end{equation}

Now, letting~$J_k =a_k+\Lambda_k b_k= \Embb[\|x_k-x_*\|^2] + \Lambda_k\Embb[\|y_k-\psi(x_{k-1})\|^2]$,~\eqref{ieq: k suff large} is rewritten as~$\Embb[J_{k+1}] \leq (1-\sigma\alpha_k)\Embb[J_k]+w_k.$
By applying~$\alpha_k=1/(\sigma k)$,~$\beta_k=k^{-2/3}$,~$\mu_k=k^{-a}$, and multiplying both sides of this inequality by~$k$, we obtain
\begin{equation} \label{ieq: app before sum}
    k\Embb[J_{k+1}] \leq (k-1)\Embb[J_k] + \frac{G_1^2G_2^2}{\sigma^2k^{1-2a}}+\Ocal\left(\frac{L^2_{\tilde{\phi}}G_1^6G_2^2}{\sigma^4k^{2/3-4a}}+\frac{L^2_{\tilde{\phi}}G_1^2V_\psi}{\sigma^2k^{2/3-2a}}\right).
\end{equation}
Next, summing the inequality~\eqref{ieq: app before sum} from~$k=1$ to~$T$ for some large~$T>0$, we obtain
\begin{equation*}
    T \Embb[J_{T+1}] \leq \Ocal \left(\frac{G_1^2G^2_2}{\sigma^2}T^{2a} + \frac{L^2_{\tilde{\phi}}G_1^6G_2^2}{\sigma^4}T^{4a+1/3}  + \frac{L^2_{\tilde{\phi}}G_1^2V_\psi}{\sigma^2}T^{2a+1/3} \right).
\end{equation*}
Thus, it follows that 
\begin{equation*}
\begin{aligned}
    \Embb[\|x_{T+1}-x_*\|^2]\leq E[J_{T+1}] &\leq \Ocal\left(\frac{G_1^2G^2_2}{\sigma^2}\frac{1}{T^{1-2a}} + \frac{L^2_{\tilde{\phi}}G_1^6G_2^2}{\sigma^4}\frac{1}{T^{2/3-4a}}  + \frac{L^2_{\tilde{\phi}}G_1^2V_\psi}{\sigma^2}\frac{1}{T^{2/3-2a}} \right)    \\
    &=\Ocal\left(\frac{1}{T^{2/3-4a}}\right).
\end{aligned}
\end{equation*}
\end{proof}
Notice that the rate in this specific strongly convex case using Algorithm~\ref{algorithm: SSG for sc} is arbitrarily close to~$1/T^{2/3}$ when~$a\to0$, which matches the rate in~\cite{wang2017stochastic} under a similar setting. 

\end{document}